\newtheorem{theorem}{Theorem}[section]
\newtheorem{corollary}[theorem]{Corollary}
\newtheorem{definition}[theorem]{Definition}
\newtheorem{lemma}[theorem]{Lemma}
\newtheorem{proposition}[theorem]{Proposition}
\theoremstyle{definition}
\newtheorem{remark}[theorem]{Remark}
\newcommand{\tri}[1]{| \! | \! | {#1} | \! | \! |}
\newcommand{\spn}{\mathrm{span}}
\newcommand{\vr}{\varepsilon}
\newcommand{\ball}{\mathbf{B}}
\newcommand{\ff}{{\mathfrak{F}}}
\newcommand{\co}{{\mathbf{c}}}
\newcommand{\dd}{{\mathbf{d}}}
\newcommand{\gm}{{\boldsymbol{\gamma}}}
\newcommand{\xx}{{\mathbf{X}}}
\newcommand{\yy}{{\mathbf{Y}}}
\newcommand{\N}{\mathbb{N}}
\newcommand{\R}{\mathbb{R}}
\newcommand{\A}{\mathcal{A}}
\newcommand{\triple}[1]{{\left\vert\kern-0.25ex\left\vert\kern-0.25ex\left\vert #1 
    \right\vert\kern-0.25ex\right\vert\kern-0.25ex\right\vert}}
\begin{document}

\numberwithin{equation}{section}

\title{Renorming AM-spaces}

\author{T.~Oikhberg and M.A.~Tursi}

\address{ Dept.~of Mathematics, University of Illinois, Urbana IL 61801, USA}
\email{oikhberg@illinois.edu, gramcko2@illinois.edu}

\subjclass[2020]{46B03, 46B04, 46B42, 46E05}

\keywords{Banach lattice, AM-space, equivalent norm}

\parindent=0pt
\parskip=3pt

\begin{abstract}
We prove that any separable AM-space $X$ has an equivalent lattice norm for which no non-trivial surjective lattice isometries exist.
Moreover, if $X$ has no more than one atom, then this new norm may be an AM-norm.
As our main tool, we introduce and investigate the class of so called Benyamini spaces, which ``approximate'' general AM-spaces.
\end{abstract}

\maketitle
\thispagestyle{empty}

\section{Introduction}\label{s:intro}

The question of renormings has been extensively studied in the Banach space literature.
The goal is to equip a prescribed Banach space with an equivalent norm in a way that alters its isometric properties in a certain desirable way (the isomorphic properties meanwhile remain the same).
Many results of this type appear in \cite{Diestel75}; for more modern treatment see \cite{FA_inf_dim_geometry} or \cite{BS theory}.

We are interested in producing a renorming with a prescribed group of isometries (throughout this paper, all isometries are assumed to be linear and surjective unless specified otherwise).
One of the first results appeared in \cite{Be}; there, it was shown that any separable real Banach space can equipped with an equivalent norm for which there are only two isometries -- the identity and its opposite.
The separability assumption was later removed in \cite{Ja}.
More recent papers \cite{FeG_TAMS10}, \cite{FeR_EM11}, \cite{FeR_DU13} deal with renorming a separable Banach space in a way that produces a prescribed group of isometries.

In this work, we consider lattice renormings of separable AM-spaces.
Recall that an AM-space is a Banach lattice in which the equality $\| x \vee y \| = \max\{ \|x\|, \|y\| \}$ for any positive $x$ and $y$; a lattice norm with this property is called an AM-norm.
We also restrict oursleves to lattice isometries -- that is, surjective (linear) isometries which preserve lattice operations. Our main result is:

\begin{theorem}\label{t:renorm_AM}
Suppose $(X, \| \cdot \|)$ is a separable AM-space, and $c > 1$. Then $X$ can be equipped with an equivalent lattice norm $\triple{ \cdot }$ so that $\| \cdot \| \leq \triple{ \cdot } \leq c \| \cdot \|$, and the identity map is the only lattice isometry on $(X. \triple{ \cdot })$.
If $X$ has no more than one atom, then $\tri{ \cdot }$ can be chosen to be an AM-norm.
\end{theorem}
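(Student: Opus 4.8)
The plan is to exploit the fact that AM-norms are extremely rigid: on a fixed vector lattice the AM-norms equivalent to $\|\cdot\|$ are governed by a single choice of weight on the structure space, so the whole problem reduces to choosing that weight to be invariant under no non-trivial lattice isometry. Concretely, I would first fix a model for $X$: using the representation of separable AM-spaces via the associated Benyamini space, realize $X$ as a sublattice of $C(\Omega)$ for a compact metric structure space $\Omega$, arranged so that the original norm is the supremum norm, so that the atoms of $X$ correspond exactly to the isolated points of $\Omega$, and so that every surjective lattice isometry of $X$ is induced by a homeomorphism $\sigma$ of $\Omega$ through a weighted composition operator $f \mapsto h\cdot(f\circ\sigma)$ with $h>0$ continuous. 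The hypothesis that $X$ has at most one atom then says that $\Omega$ has at most one isolated point, so its non-isolated part $P$ is a perfect compact metric space.

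Next I would record the correspondence between AM-norms and weights. For a bounded weight $v\colon\Omega\to(0,\infty)$ the formula $\triple{f}=\sup_{\omega}|f(\omega)|/v(\omega)$ defines a lattice norm satisfying the AM-identity — this is immediate, since the supremum distributes over the lattice maximum — and it is equivalent to $\|\cdot\|$ exactly when $v$ is bounded away from $0$ and $\infty$; choosing $v$ with values in $[1/c,1]$ yields $\|\cdot\|\le\triple{\cdot}\le c\|\cdot\|$. Taking $v$ lower semicontinuous keeps $|f|/v$ upper semicontinuous, so the supremum is attained and $\triple{\cdot}$ is a genuine AM-norm. Under this normalization the problem becomes purely topological: a homeomorphism $\sigma$ lifts to a $\triple{\cdot}$-isometry precisely when the multiplier $h=v/(v\circ\sigma)$ is continuous, and I must arrange the weight so that this forces $\sigma=\mathrm{id}$.

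The device I would use is a dense system of downward spikes. Choose a countable dense set $\{\omega_n\}\subseteq P$ of distinct points together with distinct depths $\eta_n\downarrow 0$ with $1-\eta_n\in[1/c,1)$, and set $v(\omega_n)=1-\eta_n$ and $v\equiv 1$ elsewhere. Because $\eta_n\to 0$, only finitely many spikes exceed any fixed depth, so $v$ is lower semicontinuous and, crucially, the punctured limit of $v$ at each $\omega_n$ equals $1$ (deeper spikes do not accumulate). If $\sigma$ lifts to an isometry, then continuity of $h=v/(v\circ\sigma)$ at $\omega_n$ compares the value $h(\omega_n)=(1-\eta_n)/v(\sigma(\omega_n))$ with the punctured limit $1$, forcing $v(\sigma(\omega_n))=1-\eta_n$; since the depths are pairwise distinct this gives $\sigma(\omega_n)=\omega_n$ for every $n$, and density of $\{\omega_n\}$ yields $\sigma|_P=\mathrm{id}$. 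Finally, the lone atom, if present, is the unique isolated point of $\Omega$ and is therefore fixed by every homeomorphism; hence $\sigma=\mathrm{id}$, the multiplier is $h\equiv 1$, and $(X,\triple{\cdot})$ has no lattice isometry but the identity.

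The main obstacle is the construction of the previous paragraph, namely making the discontinuity profile of $v$ simultaneously lower semicontinuous, $[1/c,1]$-valued, dense enough in $P$ to pin $\sigma$ everywhere, and, above all, rigid in the sense that its jumps cannot be absorbed by the continuous multiplier $h$ that a weighted composition operator is free to supply. This last point is exactly why continuous weights are useless: for a continuous weight every homeomorphism lifts to an isometry, and in particular, as soon as $\Omega$ has two isolated points the transposition of two atoms always lifts, since its multiplier is locally constant and hence continuous — this is precisely the obstruction ruling out more than one atom, and it shows the hypothesis to be sharp. The delicate point that must be handled with care is that the spikes are dense, so each $\omega_n$ has other spikes arbitrarily near it; the arrangement $\eta_n\to 0$ is what guarantees that these nearby spikes are arbitrarily shallow, so that the local oscillation of $v$ at $\omega_n$ records its depth $\eta_n$ unambiguously and any isometry is forced to preserve it.
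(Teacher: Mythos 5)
There are two genuine gaps in your proposal.

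First, you prove only half of the statement. The theorem asserts that \emph{every} separable AM-space -- with no restriction on the number of atoms -- admits an equivalent lattice norm whose only lattice isometry is the identity; the AM-norm clause is a refinement available when there is at most one atom. Your argument addresses only that refinement, and your closing remark that the transposition of two atoms ``shows the hypothesis to be sharp'' conflates the two claims: with two or more atoms no \emph{AM}-renorming can have trivial isometries (this is exactly Remark \ref{r:assumptions_needed}), but the theorem still demands a non-AM lattice renorming killing all isometries in that case. The paper produces it by coupling each pair of atoms through the octagonal norms $\| \cdot \|_{i,j}$ (properties [N1]--[N5] and Lemma \ref{l:T fixes atoms}); nothing in your proposal plays this role, so the case of two or more atoms is simply absent.

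Second, the model you start from does not exist for general separable AM-spaces. You assume a compact metric $\Omega$ on which the norm of $X$ is the sup norm, the atoms of $X$ are exactly the isolated points of $\Omega$, and every surjective lattice isometry (of any equivalent lattice renorming) is a weighted composition $f \mapsto h \cdot (f \circ \sigma)$ with $\sigma$ a homeomorphism of $\Omega$ and $h$ continuous. That package amounts to $X$ being lattice isometric to a $C_0(\Omega)$-space, which is false in general; the obstruction is precisely the ``linked points'' that define a Benyamini space. Concretely, let $K_1 = \{a\} \cup [1,2]$, $K_2 = [0,1]$, and $X = \{ x \in C(K_1 \cup K_2) : x(a) = C\, x(0_{K_2}) \}$: this is an atomless separable AM-space in which the point masses $\delta_{t_i}$, $t_i \in (0,1]$, $t_i \to 0_{K_2}$, converge weak$^*$ to the proper multiple $C^{-1} \delta_a$ of an atom of $X^*$ -- a phenomenon impossible in any $C_0(\Omega)$-space, where normalized point masses can only converge weak$^*$ to normalized point masses or to $0$. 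Proposition \ref{p:benyamini-approximation}, which you invoke, does not repair this: it yields only a lattice \emph{isomorphism} with distortion, onto a space that is still a proper sublattice of $C_0(K)$, in which atoms are the \emph{hereditarily} isolated points (Proposition \ref{p:atomes_in_benyamini}), not the isolated points. In that model there exist points (like $a$ above) that are isolated in $K'$ yet are not atoms; they lie neither in your perfect part $P$ nor among your ``at most one isolated point,'' so your density-plus-punctured-limit argument never forces $\sigma$ to fix them. Pinning such points down requires tracking the factors $C^{n-m}$ arising in weak$^*$ limits of point masses, which is exactly the content of the paper's convergence lemmas and the completion of the proof of Theorem \ref{t:new_theorem_about_Benyamini}. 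I should add that your weight itself -- dense spikes of pairwise distinct depths tending to $0$ -- is the same device as the paper's weight $\mu$, and your idea that distinct depths are detectable from the local oscillation of the weight is the same as the paper's invariant $\nu$; what is missing is the justification of the framework in which that argument is allowed to run, and that justification is the bulk of the paper's work.
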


The restriction on the number of atoms is essential; see Remark \ref{r:assumptions_needed}.

The proof of Theorem \ref{t:renorm_AM} proceeds in two steps. In Section \ref{s:benyamini}, we
introduce a new class of AM-spaces, which we call ``Benyamini spaces,'' for their original discoverer \cite{Ben}.
We establish that any separable AM-space can be transformed into a Benyamini space with arbitrarily small distortion; this result may be interesting in its own right.
In Section \ref{s:am_renormings_benyamini} we renorm a Benyamini space, eliminating all non-trivial isometries (the new norm may cease to be an AM-norm if more than one atom is present).
The proof of Theorem \ref{t:renorm_AM} then follows by combining Proposition \ref{p:benyamini-approximation} and Theorem \ref{t:new_theorem_about_Benyamini}.

Throughout this paper, we use the standard functional analysis facts and notation. For more detail, the reader is referred to e.g.~\cite{LT2} and \cite{M-N}. All spaces are assumed to be separable, and the field of scalars is that of real numbers. For a normed space $Y$, we use the notation $\ball(Y) = \{ y \in Y : \|y\| \leq 1\}$.
If $Y$ is an ordered space and $A \subset Y$, we denote by $A_+$ the positive part of $A$ -- that is, $\{a \in A : a \geq 0\}$.

\section{Benyamini spaces}\label{s:benyamini}

Here we investigate a class of AM-spaces -- the Benyamini spaces. Such spaces are flexible: any separable AM-space can be transformed into a space of this form (Proposition \ref{p:benyamini-approximation}).
On the other hand, Benyamini spaces can be easily analyzed, since they have a concrete representation, similar to a $C(K)$ space.
In particular, we describe atoms in, and duals of, such spaces in Subsections \ref{ss:atoms_benyamini} and \ref{ss:dual}, respectively.

\subsection{Definition and basic properties}

\begin{definition}\label{d:benyamini}
We say that a Banach lattice $X$ is a \emph{$C$-Benyamini space} (the constant $C > 1$ will often be omitted) if it is a sublattice of $C(K)$, where:
\begin{enumerate}
\item $K$ is the one point compactification of the union of mutually disjoint compact sets $K_n$ ($K = (\cup_n K_n) \cup \{\infty\}$).
\item $X \subset C_0(K)$ -- that is, any $x \in X$ vanishes at $\infty$.
\item $X$ separates points for each $K_n$ (that is, for all $t,s\in K_n$, there exists $x \in X$ such that $x(t) \neq x(s)$).
\item If $t \in K_m, \ s \in K_n$, 
and for all $x \in X$,  $x(t) = \lambda x(s)$ for some fixed $\lambda$, then $\lambda = C^{n-m}$.
\end{enumerate}
\end{definition}

Note that, if $X$ is separable, then each $K_n$ is metrizable (due to (3)). Consequently, $K$ is metrizable.

We begin by establishing some properties of Benyamini spaces.

\begin{lemma}\label{l:homeo-subsets}
	Let $K$, $C$, and $X \subseteq C_0(K)$ be as above.  For $n\neq m$, define $D(m,n)$ by \[ D(m,n) := \big\{t \in K_m: \exists s \in K_n \text{ such that } \forall x\in X, x(t) = C^{n-m} x(s)  \big\}. \]
	Then $D(n,m)$ is closed and homeomorphic to $D(m,n)$.  
\end{lemma}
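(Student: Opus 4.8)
The plan is to exhibit an explicit homeomorphism between $D(m,n)$ and $D(n,m)$ and, along the way, verify that both sets are closed. The central observation is that membership in $D(m,n)$ comes with a \emph{canonical partner}: if $t \in D(m,n)$, the point $s \in K_n$ satisfying $x(t) = C^{n-m} x(s)$ for all $x \in X$ is \emph{unique}. Indeed, if $s, s' \in K_n$ both work, then $x(s) = x(s')$ for every $x \in X$ (dividing by the nonzero scalar $C^{n-m}$), and property (3) of Definition \ref{d:benyamini} forces $s = s'$. This lets me define a map $\phi \colon D(m,n) \to K_n$, $t \mapsto s$.

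First I would check that $\phi$ is a bijection onto $D(n,m)$. The defining relation $x(t) = C^{n-m} x(s)$ is equivalent to $x(s) = C^{m-n} x(t)$, which is exactly the condition witnessing $s \in D(n,m)$ (with witness $t$); running this equivalence in both directions shows $\phi(D(m,n)) = D(n,m)$, and the analogous map $\psi \colon D(n,m) \to K_m$ is a two-sided inverse of $\phi$. Injectivity also follows directly: if $\phi(t_1) = \phi(t_2)$, then $x(t_1) = x(t_2)$ for all $x \in X$, so $t_1 = t_2$ by point separation on $K_m$.

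The substantive step is to show simultaneously that $D(m,n)$ is closed and that $\phi$ is continuous; by the symmetric roles of $m$ and $n$ this also yields that $D(n,m)$ is closed. Here I would argue by compactness. For closedness, take a net $(t_\alpha)$ in $D(m,n)$ with $t_\alpha \to t \in K_m$ and set $s_\alpha = \phi(t_\alpha) \in K_n$; since $K_n$ is compact, a subnet satisfies $s_\alpha \to s \in K_n$, and passing to the limit in $x(t_\alpha) = C^{n-m} x(s_\alpha)$ using the continuity of each $x \in X$ gives $x(t) = C^{n-m} x(s)$ for all $x$, so $t \in D(m,n)$. The same mechanism yields continuity of $\phi$: given $t_\alpha \to t$ in $D(m,n)$, every convergent subnet of $(s_\alpha)$ must converge to $\phi(t)$ by the uniqueness established above, and compactness of $K_n$ then forces $s_\alpha \to \phi(t)$.

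Finally, since $D(m,n)$ is a closed subset of the compact set $K_m$, it is compact, while $D(n,m) \subseteq K_n$ is Hausdorff; a continuous bijection from a compact space to a Hausdorff space is automatically a homeomorphism, so $\phi$ is the desired homeomorphism. The only delicate point is the joint closedness/continuity argument, and the reason it goes through cleanly is the uniqueness of the partner point $s$, which is precisely where property (3) (point separation on each $K_n$) enters; note that property (4) plays no role here, since the proportionality constant is fixed to be $C^{n-m}$ throughout.
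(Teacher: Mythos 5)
Your proof is correct and follows essentially the same route as the paper's: define the partner map via uniqueness (from the point-separation property), verify bijectivity, and use compactness of $K_n$ to pass to limits for both closedness and continuity. The only cosmetic differences are that you use nets rather than sequences and finish by invoking the compact-to-Hausdorff criterion for homeomorphisms, whereas the paper appeals to the symmetry $\phi_{nm} = \phi_{mn}^{-1}$; both are sound.
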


\begin{proof}
Define $\phi_{mn} : D(m,n) \to D(n,m)$ by setting $\phi_{mn}(t)$ to be the unique $s \in D(n,m)$ with the property that $x(t) = C^{n-m} x(s)$ for any $x \in X$.  
	Because $X$ separates points for each $K_n,$ $\phi_{mn}$ is well defined and injective.  By definition of $D(m,n)$, it is bijective, and $ \phi_{nm} = \phi_{mn}^{-1}$. To show continuity, suppose $t_k \rightarrow t \in D(m,n)$ with $t_k \in D(m,n)$.  Suppose there exists a subsequence $s_j = \phi_{mn}(t_{k_j}) \rightarrow s \neq \phi_{mn}(t) $ (we can limit ourselves to such a case since $K_m$ is compact).  Then for all $x\in X$,
	\[ x(s) = \lim x(s_j) = \lim\limits_{j}  C^{m-n}x(t_{k_j}) = C^{m-n} x(t) = x (\phi_{m,n}(t)) , \] 
	which is a contradiction, since $X$ separates points. 
	To prove that $D(m,n)$ is closed, suppose $t_k \rightarrow t$.  Then for all $x\in X,$ \[  x(t) = \lim\limits_{k} x(t_k) = C^{n-m} \lim\limits_{k} x(\phi_{mn}(t_k)).  \]
	By compactness, we assume that $\phi_{mn}(t_k) \rightarrow s\in K_n$.  Hence for all $x\in X, x(t) = C^{n-m}x(s)$, so $s\in D(m,n)$.
	\end{proof}

 
The importance of Benyamini spaces stems from the fact that any separable AM-space can be ``approximated'' by a Benyamini space.

\begin{proposition}\label{p:benyamini-approximation}
If $X$ is a separable AM-space, then for every $C > 1$ there exists a Benyamini space $X'$ and a surjective lattice isomorphism $\Phi : X \to X'$ so that for all $x\in X$, $\|x \| \leq \| \Phi(X) \| \leq C \| x \|$.
\end{proposition}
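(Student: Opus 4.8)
The plan is to realize $X$ inside a concrete $C(K_0)$ via its lattice homomorphisms, and then to carve out the Benyamini levels $K_n$ as the level sets of a single auxiliary function. First I would invoke the Kakutani-type representation of AM-spaces: let $K_0 \subset X^*$ be the set of positive lattice homomorphisms of norm at most $1$, with the weak$^*$ topology. Since $X$ is separable, $K_0$ is compact and metrizable, it contains $0$, and the evaluation map $J : X \to C(K_0)$, $Jx(\phi) = \phi(x)$, is a lattice homomorphism with $\|Jx\|_\infty = \sup_{\phi \in K_0} |\phi(x)| = \|x\|$, the supremum being attained on the norm-one homomorphisms. As $\phi \to 0$ weak$^*$ we have $\phi(x) \to 0$, so $J$ maps $X$ into $\{f \in C(K_0) : f(0) = 0\}$. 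Fixing a sequence $(x_k)$ in $\ball(X)$ with dense linear span, I would set $\rho(\phi) = \sum_k 2^{-k} |\phi(x_k)|$; this is weak$^*$-continuous, positively homogeneous, bounded by $1$, and vanishes exactly at $0$.

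The key idea is to \emph{quantize the scale}. Define, for $n \ge 0$, the sets $K_n := \{\phi \in K_0 : \rho(\phi) = C^{-n}\}$. These are pairwise disjoint (they carry different values of $\rho$), each is closed in $K_0$ and bounded away from $0$, hence compact, and they accumulate only at $0$; consequently $\tilde K := \big(\bigcup_n K_n\big) \cup \{0\}$ is a compact subset of $K_0$ which is exactly the one-point compactification of $\bigsqcup_n K_n$, with $\infty = 0$. Since distinct points of $K_n$ are distinct functionals, $X$ automatically separates the points of each $K_n$. I would then define $\Phi : X \to C(\tilde K)$ by $\Phi x(\phi) = C\,\phi(x)$, the factor $C$ serving only to normalize the final estimate; as a positive multiple of a lattice homomorphism, each evaluation $x \mapsto \Phi x(\phi)$ is again a lattice homomorphism, so $\Phi$ is a lattice homomorphism into $C_0(\tilde K)$, and $X' := \Phi(X)$ is a sublattice.

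The verification then splits into the four defining conditions and the distortion bound. Conditions (1)--(3) are handled above. For (4), suppose $\phi \in K_m$, $\psi \in K_n$ and $\Phi x(\phi) = \lambda \Phi x(\psi)$ for all $x$; then $\phi = \lambda \psi$ in $X^*$ with $\lambda > 0$, and homogeneity of $\rho$ gives $C^{-m} = \lambda C^{-n}$, i.e.\ $\lambda = C^{n-m}$, exactly as required (the only proportional pairs are scaled copies of a single ray, occurring across levels). For the norm estimate, write an arbitrary norm-one homomorphism as $\psi$ with $\rho(\psi) = \sigma \in (0,1]$; the smallest $n$ with $C^{-n} \le \sigma$ produces the point $(C^{-n}/\sigma)\psi \in K_n$, a scaling $r\psi$ with $r = C^{-n}/\sigma \in (1/C, 1]$. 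Since every point of $\bigcup_n K_n$ has the form $r\psi$ with $r \le 1$, we get $\sup_{\phi}|\phi(x)| \le \|x\|$, while the copy just produced yields $\sup_{\phi}|\phi(x)| \ge \|x\|/C$; multiplying by $C$ gives $\|x\| \le \|\Phi x\| \le C\|x\|$. Boundedness below forces $\Phi$ to be injective with closed range, hence a lattice isomorphism onto $X'$, and the same computation with $\phi \to 0$ shows $\Phi x$ vanishes at $\infty$, so $X'$ is a genuine Benyamini space.

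I expect the main obstacle to be conceptual rather than computational: the Benyamini format demands that a possibly connected structure space (e.g.\ that of $C_0(\R)$) be presented as a disjoint union of compact pieces, yet condition (4) forbids gluing overlapping pieces by the constant $1$ and instead forces the quantized constant $C^{n-m}$. The device resolving this tension is precisely the use of the discrete level sets $\{\rho = C^{-n}\}$: quantizing the scale simultaneously disjointifies the pieces, makes every cross-level proportionality constant an exact power of $C$, and keeps each ray represented within a factor $C$ of its true norm. The points requiring care are therefore checking that the level sets are genuinely clopen in their union (so that $\tilde K$ is the one-point compactification rather than a quotient), that no two distinct points of a single level are proportional (yielding both separation and the triviality of (4) within a level), and that the factor $r$ always lies in $(1/C, 1]$, which is what pins the distortion to $C$.
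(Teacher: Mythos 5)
Your proposal is correct, but it follows a genuinely different route from the paper. The paper works in the \emph{primal} picture: it represents $X\subset C(H)$, builds a single element $\psi=(C-1)\sum_n C^{-n}x_n\in X_+$ from a dense sequence in $\ball(X)_+$, cuts $H$ into the overlapping strips $H_n=\{C^{-n}\le\psi\le C^{-n+1}\}$, passes to \emph{disjoint copies} of these strips to disjointify them, applies a weight operator $U$ (division by $\psi$ and multiplication by $C^{1-n}$ on the $n$-th strip) to force condition (4) of Definition \ref{d:benyamini}, and finally takes a \emph{quotient} by the relation of non-separation to obtain condition (3). You instead work in the \emph{dual}: the compact pieces are carved out of the weak$^*$-compact cone $K_0$ of positive lattice homomorphisms as the exact level sets $K_n=\{\rho=C^{-n}\}$ of a positively homogeneous weak$^*$-continuous function. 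Homogeneity plus the scaling invariance of the cone ($r\phi\in K_0$ for $r\in[0,1]$, and $r\phi$ is again a lattice homomorphism) is what makes this work: exact level sets are automatically disjoint (so no disjoint copies are needed), points of $K_n$ are already functionals (so separation is automatic and no quotient is needed), condition (4) is a one-line consequence of $\rho(\lambda\psi)=\lambda\rho(\psi)$, and the paper's explicit renormalizing weight is replaced by sampling each ray of homomorphisms at its unique representative of scale $C^{-n}$, with the factor $r\in(1/C,1]$ pinning the distortion to $C$. What the paper's approach buys is concreteness and fidelity to Benyamini's original lemma (the new space is built directly from the given topological representation); what yours buys is the elimination of the two cleanup steps (copies and quotient), at the price of invoking the Kakutani-type fact that the AM-norm is the supremum over norm-at-most-one lattice homomorphisms and running the compactness arguments in the weak$^*$ topology (e.g., the verification that $\overline{\cup_n K_n}\subset(\cup_n K_n)\cup\{0\}$, which you correctly reduce to weak$^*$ continuity of $\rho$). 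The key estimates -- $\|x\|\le\|\Phi x\|\le C\|x\|$ via $r\in(1/C,1]$, and the homogeneity computation $C^{-m}=\lambda C^{-n}$ -- are sound as written.
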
 

The proof below is similar to that of \cite[Lemma 1]{Ben}.

\begin{proof}
	We can assume that $X \subset C(H)$ for some Hausdorff compact $H$.
	First, as in \cite{Ben}, we consider the set $F:= \cap_{x \in X} x^{-1}(0)$.  If $F \neq \emptyset$,  identify $F$ with a single point $z$ by passing from $K$ to $K/F$.  Let $x_n$ be a dense sequence in $\ball(X)_+$. Let
	$\psi = (C-1)\sum_{n=1}^\infty C^{-n} x_n$. Clearly $\psi$ belongs to $X$.
	
	Let $H_n = \{ t\in H:  C^{-n} \leq \psi(t) \leq C^{-n+1}  \}$.  If infintely many $H_n$'s are non-empty, let $\widetilde{H}_n$ be disjoint copies of $H_n$, and let $\widetilde{H} = (\cup_n \widetilde{H}_n) \bigcup \{\infty\}$ be the one point compactification of $\bigcup \widetilde{H}_n$. Otherwise, let $\widetilde{H} = \bigcup_n \widetilde{H_n}$.	Define the map $\Psi:\widetilde{H} \rightarrow H$ sending $\widetilde{H}_n$ to $H_n$ and $\infty$ to $z$. Note that if $F$ is empty, then $\psi(t) > 0$ for all $t\in K$, and since $\psi$ itself is continuous, its image is compact and so must be bounded below; then $H_n = \emptyset$ for $n$ large enough. Otherwise, $\psi$ vanishes only at $z$. In either case, $\Psi$ is a continuous surjection from $\widetilde{H}$ onto $H$, which implies that $C(H)$ embeds into  $C(\widetilde{H})$ isometrically via the map $x \mapsto \widetilde{\Psi}x := x \circ \Psi$. 
	
	Now define a lattice isomorphism $U : C_0(\widetilde{H}) \to C_0(\widetilde{H})$ by setting, for $x \in C_0(\widetilde{H})$, $[Ux](\infty) = 0$, and $\displaystyle [Ux](t) = \frac{C^{1-n} x(t)}{(\widetilde{\Psi}\psi)(t)}$. Observe that $\|Ux\| \leq \|x\| \leq C \|x\|$.
	Then $T = U \circ \widetilde{\Psi}$ is a lattice homomorphism, and $Y = T(X)$ is a sublattice of $C_0(\widetilde{H})$. We claim that, if $t \in \widetilde{H}_m$ and $s \in \widetilde{H}_n$ are such that $y(t) = \lambda y(s)$ for any $y \in Y'$, then $\lambda = C^{n-m}$. Indeed, $y = Tx$ for some $x \in X$, so
	$$
	\lambda = \frac{y(t)}{y(s)} = \frac{C^{1-m}x(\Psi(t))}{\psi(\Psi(t))} \cdot \frac{\psi(\Psi(s))}{C^{1-n} x(\Psi(s))} =
	C^{n-m} \cdot \frac{x(\Psi(t))}{x(\Psi(s))} \cdot \frac{\psi(\Psi(s))}{\psi(\Psi(t))} .
	$$
	From this, it follows that $x(\Psi(t))/x(\Psi(s))$ is a constant on $X$. Either $\Psi(t)= \Psi(s)$, or $t' = \Psi(t)$ and $s' = \Psi(s)$ are ``defining points'' for $X \subset C(H)$ -- that is, $x(t')/x(s')$ is independent of $x \in X$. Either way, $\lambda = C^{n-m}$.
	
	Finally, we transform the sets $\widetilde{H}_n$ into sets $K_n$, whose points are separated by $X'$.
	By the preceding paragraph, if $t, s \in \widetilde{H}_n$ are such that $y(t) = \lambda y(s)$ for any $y \in Y$, then $\lambda = 1$.
	Define an equivalence relation on $\widetilde{H}$: $t \sim s$ if for all $y\in Y, \ y(s)  = y(t)$. Clearly the equivalence classes are closed, hence each quotient space $K_n := \widetilde{H}_n/\sim$ is compact.
	Identify $\widetilde{H}/\sim$ with $K = (\cup_n K_n) \cup \{\infty\}$, which is the one-point compactification of $\cup_n K_n$. Define $\Phi : Y \to C_0(K)$ by setting, for $y \in Y$, $[\Phi y]([t]) = y(t)$, where $[t]$ is the equivalence class of $t$.
	Clearly $\Phi$ is a lattice isometry. $X' = \Phi(Y)$ is a Benyamini space, and $\Phi \circ T : X \to X'$ is a lattice isomorphism with desired properties.
	\end{proof}

	


	\begin{remark}\label{r:benyamini preserve isometries}
	 The Benyamini space $X'$, constructed from $X$ using Proposition \ref{p:benyamini-approximation}, may have a different group of isometries.
	 We do not know whether the Benyamini space can be constructed while preserving the group of isometries (or even a subgroup thereof).
	\end{remark}

\subsection{Extension of functions in Benyamini spaces}\label{ss:extensions}

We say that a function $x \in C(K_M \cup \ldots \cup K_N)$ is \emph{consistent} if $x(s) = C^{n-m} x(\phi_{mn}(s))$ whenever $s \in D(m,n)$, with $M \leq n,m \leq N$.
We shall say that a family of functions $x_n \in C(K_n)$ ($M \leq n \leq N$) is \emph{consistent} if the function $x \in C(K_M \cup \ldots \cup K_N)$, defined via $x|_{K_n} = x_n$, is consistent.

\begin{proposition}\label{p:consistent_extension}
(1) If $L \leq N$, and $x \in C(K_L \cup \ldots \cup K_N)$ is a consistent function, then there exists $\tilde{x} \in X$ so that $\tilde{x}|_{K_1 \cup \ldots \cup K_N} = x$, and, for $j \notin \{L, \ldots, N\}$, $\sup_{K_j} |\tilde{x}| \leq \max_{L \leq i \leq N} C^{i-j} \sup_{K_i} |x|$.

(2) If, furthermore, $y \in X_+$ is such that $0 \leq x \leq y$ on $K_L \cup \ldots \cup K_N$, then $\tilde{x}$ can be selected in such a way that, in addition, $0 \leq \tilde{x} \leq y$.
\end{proposition}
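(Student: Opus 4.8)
The plan is to first identify $X$ concretely as the lattice of \emph{all} consistent functions, and then to produce the required extension by a Tietze-type construction followed by a lattice truncation. Throughout write $Q = K_L \cup \cdots \cup K_N$, put $a_i = \sup_{K_i}|x|$, and set $P = \max_{L \le i \le N} C^i a_i$, so that the target estimate $\sup_{K_j}|\tilde x| \le \max_{L\le i\le N} C^{i-j}\sup_{K_i}|x|$ becomes the clean envelope bound $\sup_{K_j}|\tilde x| \le C^{-j}P$ for every $j$ (on $Q$ itself this is automatic, since $C^{-i}P \ge a_i$). Two elementary observations drive the argument. First, consistency is a linear condition and is preserved by the lattice operations: since each factor $C^{n-m}$ is positive, it commutes with $\max$ and $\min$, so finite lattice combinations of consistent functions are again consistent. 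Second, part (2) will follow from part (1) for free: given the extension $\tilde x$ of (1) and a dominating $y \in X_+$, the median $(\tilde x \vee 0)\wedge y$ is again consistent, again lies in $X$, still restricts to $x$ on $Q$ (because $0 \le x \le y$ there), and satisfies $|(\tilde x \vee 0)\wedge y| \le |\tilde x|$ pointwise, hence inherits the envelope bound.

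The key structural step is to show that $X$ coincides with $W := \{ f \in C_0(K) : f \text{ is consistent on } \cup_n K_n\}$; here I assume, as I may for the spaces delivered by Proposition \ref{p:benyamini-approximation}, that $\cap_{x \in X} x^{-1}(0) = \{\infty\}$, i.e.\ that no point of $\cup_n K_n$ is a common zero. The inclusion $X \subseteq W$ is immediate from the definition of the sets $D(m,n)$. For the reverse inclusion I invoke the lattice form of the Stone--Weierstrass theorem (Kakutani--Stone) on the compact space $K$: a closed vector sublattice $X$ of $C(K)$ contains every $f$ that it matches at every pair of points, i.e.\ such that for all $s,t \in K$ there is $g \in X$ with $g(s) = f(s)$ and $g(t) = f(t)$. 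For $f \in W$ this two-point matching is verified by a short case analysis: at $\infty$ every $g\in X$ already vanishes, while for two distinct finite points $s \in K_m$, $t \in K_n$ the image of $x \mapsto (x(s),x(t))$ is a linear subspace $V \subseteq \R^2$, and the absence of common zeros together with Definition \ref{d:benyamini}(4) (read also with $m=n$, where it forces $\lambda=1$) forces $V$ to be either all of $\R^2$ (when $s,t$ are unlinked) or exactly the line $\{(C^{n-m}\tau,\tau):\tau\in\R\}$ (when $s = \phi_{nm}(t)$). In the first case any pair of values is attainable; in the second the consistency of $f$ guarantees $(f(s),f(t)) \in V$. Hence $W \subseteq \overline{X} = X$.

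With $X = W$ in hand it remains to manufacture, for the given consistent $x$ on $Q$, a consistent function $\tilde x \in C_0(K)$ extending it; membership $\tilde x \in X$ is then automatic. On the ``forced'' set $B_j := \bigcup_{i} D(j,i)$, the union being over the blocks $K_i$ already treated, the value of any consistent extension is prescribed by $\tilde x(s) = C^{i-j}x(\phi_{ji}(s))$; by Lemma \ref{l:homeo-subsets} each $D(j,i)$ is closed and $\phi_{ji}$ is a homeomorphism, so these prescribed values are continuous, and they agree on overlaps because links chain transitively and $x$ is consistent. I then sweep through the remaining blocks in the order $K_{N+1}, K_{N+2}, \dots$ and $K_{L-1}, \dots, K_1$; at block $K_j$ the forced values (read off from all previously defined blocks) are continuous on the closed set $B_j \subseteq K_j$ and bounded by $C^{-j}P$, so Tietze's theorem extends them to $K_j$ while respecting that bound. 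Since a link between two blocks is always resolved at the later of the two in this order, the resulting global function is consistent; since its supremum on $K_j$ is at most $C^{-j}P \to 0$, it extends by $0$ at $\infty$ and lies in $C_0(K)\cap W = X$. Finally I truncate against the envelope $b$ given by $b|_{K_j} = C^{-j}P$, which is itself consistent and lies in $C_0(K)$, hence in $X$: replacing $\tilde x$ by $(\tilde x \wedge b)\vee(-b)$ keeps it in $X$, does not alter it on $Q$ (since $b \ge |x|$ there), and yields $\sup_{K_j}|\tilde x|\le C^{-j}P$, which is the required estimate. Part (2) then follows by the median reduction of the first paragraph.

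I expect the main obstacle to be the Stone--Weierstrass identification $X = W$, and specifically the two-point analysis: ruling out degenerate evaluation images forces one to confront common zeros and the defining-point condition Definition \ref{d:benyamini}(4) directly, and it is exactly here that the hypothesis that $\cup_n K_n$ carries no common zero is genuinely used (without it $W$ can be strictly larger than $X$ and the statement fails). The only other delicate point is the bookkeeping in the block-by-block Tietze extension -- checking that the forced values are well defined and continuous where several links coincide, and that resolving each link ``at the later block'' never produces a conflict -- but this is a routine consequence of the transitivity of the linking relation, which in turn rests on $\phi_{nm} = \phi_{mn}^{-1}$ from Lemma \ref{l:homeo-subsets}.
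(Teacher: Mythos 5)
Your proposal is correct in substance, and its core --- the block-by-block Tietze extension with values forced on the sets $D(j,i)$, the well-definedness via transitivity of the linking relation, and the envelope bound $C^{-j}P$ --- is exactly the mechanism of the paper's own proof, which splits it into an upward step (Lemma \ref{l:consistent_extension_up}, finitely many blocks below $L$) and a downward step (Lemma \ref{l:consistent_extension_down}, the recursion over $j > N$). You depart from the paper in two places, both to your credit. First, you prove the identification of $X$ with the lattice $W$ of consistent functions in $C_0(K)$ via Kakutani--Stone; the paper uses this silently --- its proof of Lemma \ref{l:consistent_extension_down} ends by declaring that the constructed function ``belongs to $X$'' because it is continuous, vanishes at $\infty$, and is consistent, which is precisely the inclusion $W \subseteq X$ that you make explicit. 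Your two-point analysis is sound: for distinct finite points the evaluation image is either all of $\R^2$ or the line forced by Definition \ref{d:benyamini}(4), and consistency of $f$ places $(f(s),f(t))$ on that line. (Note, though, that you need not restrict to spaces produced by Proposition \ref{p:benyamini-approximation}: the absence of common zeros on $\cup_n K_n$ is automatic for any nontrivial Benyamini space, since if $t$ were a common zero and $s$ were not, then $\lambda = 0$ would witness the hypothesis of Definition \ref{d:benyamini}(4), contradicting $\lambda = C^{n-m} \neq 0$.) Second, your part (2) --- a single median truncation $(\tilde x \vee 0)\wedge y$ performed at the end, which stays in $X$ simply because $X$ is a sublattice containing $\tilde x$, $0$ and $y$ --- is cleaner than the paper's, which reruns the whole recursion performing a truncation $u \wedge y$ at every block.

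One point needs repair. The sweep order ``$K_{N+1}, K_{N+2}, \dots$ and $K_{L-1}, \dots, K_1$'' is not a legitimate induction as stated: the downward sweep is infinite, so the upward blocks would only be treated ``after stage $\omega$,'' at which point the forced set $B_j$ is an infinite union of the closed sets $D(j,i)$, which need not be closed, and the pasted prescription need not be continuous. Alternatively, if you intend the two sweeps to run independently, then a link between some $K_j$ with $j>N$ and some $K_i$ with $i<L$ gets values assigned on both sides separately, and nothing forces the two assignments to agree, so consistency of the final function can fail. The fix is exactly the paper's ordering: exhaust the finitely many upward blocks first (Lemma \ref{l:consistent_extension_up}), then run the downward recursion (Lemma \ref{l:consistent_extension_down}); any linear order in which every block is treated at a finite stage works, since then each $B_j$ is a finite union of closed sets and every cross-link is resolved at a well-defined later block. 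With that reordering your argument is complete.
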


\begin{remark}
 In a similar fashion, one can show that if $y, z \in X$ are such that $z \leq x \leq y$ on $K_M \cup \ldots \cup K_N$, then $\tilde{x}$ can also be selected in such a way that $z \leq \tilde{x} \leq y$.
\end{remark}

The proof of Proposition \ref{p:consistent_extension} is obtained by combining Lemmas \ref{l:consistent_extension_down} and \ref{l:consistent_extension_up}.

First we deal with ``downward'' extensions.

\begin{lemma}\label{l:consistent_extension_down}
(1) If $x \in C(K_1 \cup \ldots \cup K_N)$ is a consistent function, then there exists $\tilde{x} \in X$ so that $\tilde{x}|_{K_1 \cup \ldots \cup K_N} = x$, and, for $j > N$, $\sup_{K_j} |\tilde{x}| \leq \max_{1 \leq i \leq N} C^{i-j} \sup_{K_i} |x|$.

(2) If, furthermore, $y \in X_+$ is such that $0 \leq x \leq y$ on $K_1 \cup \ldots \cup K_N$, then $\tilde{x}$ can be selected in such a way that, in addition, $0 \leq \tilde{x} \leq y$.
\end{lemma}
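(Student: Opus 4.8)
The plan is to build the extension in two stages: first produce a \emph{continuous} consistent extension $\bar{x} \in C_0(K)$ of $x$ that already satisfies the required estimate, and then show that any such function automatically lies in $X$ via the lattice form of the Stone--Weierstrass theorem. Write $K' = K_1 \cup \ldots \cup K_N$ and, for $j > N$, put $R_j = \max_{1 \le i \le N} C^{i-j} \sup_{K_i} |x|$; the key arithmetic fact is the self-similarity $C^{a-j} R_a = R_j$ for $N < a < j$, together with $R_j \le C^{N-j} \sup_{K'} |x| \to 0$. I would first dispose of (2): given $\tilde{x}$ as in (1) and $y \in X_+$ with $0 \le x \le y$ on $K'$, the function $(\tilde{x} \vee 0) \wedge y$ lies in $X$ (a sublattice containing $0$ and $y$), equals $x$ on $K'$ since there $0 \le x \le y$, satisfies $0 \le (\tilde{x} \vee 0) \wedge y \le y$ globally, and obeys $0 \le (\tilde{x} \vee 0) \wedge y \le |\tilde{x}|$ pointwise, so the bound on each $K_j$ is preserved. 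Thus (2) reduces to (1).

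For (1), I would define $\bar{x}$ on $K_{N+1}, K_{N+2}, \dots$ one set at a time, keeping it consistent and bounded by $R_j$ on $K_j$. Suppose $\bar{x}$ is defined and consistent on $K_1 \cup \ldots \cup K_{j-1}$ with the estimate. On $K_j$ the value is prescribed exactly on the closed set $\bigcup_{i<j} D(j,i)$ (closed by Lemma \ref{l:homeo-subsets}) by $\bar{x}(t) = C^{i-j}\bar{x}(\phi_{ji}(t))$. These prescriptions are continuous (as $\phi_{ji}$ is continuous) and agree on overlaps: a point $t \in D(j,i_1) \cap D(j,i_2)$ has $\phi_{j i_1}(t)$ and $\phi_{j i_2}(t)$ linked, so the inductive consistency of $\bar{x}$ below level $j$ makes the two prescribed values coincide. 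By the self-similarity of the $R_j$ (and $C^{i-j}\sup_{K_i}|x| \le R_j$ for $i \le N$) the prescribed data has modulus at most $R_j$, so a Tietze extension produces a continuous $\bar{x}$ on $K_j$ with $\sup_{K_j}|\bar{x}| \le R_j$; links from $K_j$ to higher sets are honoured automatically once those sets are reached. Since $R_j \to 0$, setting $\bar{x}(\infty) = 0$ yields $\bar{x} \in C_0(K)$, consistent on all of $K$.

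Finally I would invoke the lattice Stone--Weierstrass theorem (see, e.g., \cite{M-N}): a function $f \in C(K)$ lies in the uniform closure of a sublattice $L \subseteq C(K)$ iff for all $s,t \in K$ there is $g \in L$ with $g(s)=f(s)$ and $g(t)=f(t)$. I must check this for $f = \bar{x}$, $L = X$. No point of any $K_n$ is a common zero of $X$: applying Definition \ref{d:benyamini}(4) with $t=s$, the vanishing of all $x \in X$ at $s$ would force every scalar to equal $1$. Now for $s,t \in K$: if one is $\infty$ then all $g \in X$ already vanish there, and I match the other point by scaling a function not vanishing at it; if $s \ne t$ lie in a common $K_n$, Definition \ref{d:benyamini}(4) (a forced ratio would be $1$, contradicting point separation in (3)) gives $\{(g(s),g(t)) : g \in X\} = \R^2$ and free interpolation; if $s \in K_m$, $t \in K_n$ with $m \ne n$ are linked, consistency of $\bar{x}$ gives $\bar{x}(t)=C^{n-m}\bar{x}(s)$, so any $g$ matching $\bar{x}$ at one point (again by scaling) matches at the other; and unlinked pairs across different $K_n$ again admit free interpolation, since a forced dependence would be a linkage. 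Hence $\bar{x}$ lies in the uniform closure of $X$, which equals $X$ as $X$ is a closed sublattice of $C_0(K)$; taking $\tilde{x} = \bar{x}$ proves (1).

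The delicate point is the inductive construction: one must maintain global consistency across \emph{all} the sets $D(m,n)$ --- including linkages among the still-undefined high-index sets --- while forcing the geometric decay $R_j$. The self-similarity $C^{a-j}R_a = R_j$ is precisely what lets the bound survive multi-step chains, and the closedness of the $D(j,i)$ together with the path-independence of forced ratios (Definition \ref{d:benyamini}(4)) is what makes the prescribed boundary data a well-defined continuous function eligible for Tietze extension. By comparison, once $\bar{x}$ is in hand the Stone--Weierstrass step is routine.
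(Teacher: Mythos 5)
Your proof is correct, and its core coincides with the paper's: the same level-by-level recursion in which the values on $K_j$ are forced on the closed set $\bigcup_{i<j} D(j,i)$ by the lower levels, checked to be well defined (via the induced linkages among the lower-level images) and continuous there, bounded by $R_j$ through the self-similarity $C^{a-j}R_a = R_j$, and then extended over $K_j$ by Tietze without increasing the sup norm, with the decay of $R_j$ giving continuity at $\infty$. You deviate in two places, and both deviations are worth recording. First, the paper simply asserts that the resulting consistent function in $C_0(K)$ belongs to $X$ (continuity on each $K_n$, vanishing at $\infty$, and consistency are listed as if sufficient, with no further justification); you actually prove this membership, via the Kakutani--Stone two-point interpolation criterion for a uniformly closed vector sublattice, and your observation that Definition~\ref{d:benyamini}(4) applied with $t=s$ forbids common zeros of $X$ on $\bigcup_n K_n$ is exactly what makes the case analysis (pairs in one $K_n$, linked pairs, unlinked pairs, pairs involving $\infty$) close up. So your write-up fills in a step the paper treats as self-evident. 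Second, for part (2) the paper re-runs the entire recursion, truncating by $u \wedge y$ at each level (which tacitly uses that $y$ is itself consistent, so truncation does not disturb the forced values); you instead deduce (2) from (1) in one stroke by passing to $(\tilde x \vee 0)\wedge y$, which lies in the sublattice $X$, agrees with $x$ on $K_1\cup\dots\cup K_N$, and is dominated by $|\tilde x|$, hence inherits all the bounds. Your reduction is cleaner and avoids that subtlety. One cosmetic point: rename your abbreviation $K'$ for $K_1\cup\dots\cup K_N$, since the paper reserves $K'$ for $\bigcup_n K_n'$.
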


\begin{proof}
(1) We define $\tilde{x}$ recursively. Suppose $\displaystyle \tilde{x}|_{K_1 \cup \ldots \cup K_{M-1}}$, with $M-1 \geq N$, has already been defined in such a way that $\sup_{K_j} |\tilde{x}| \leq \max_{1 \leq i \leq N} C^{i-j} \sup_{K_i} |x|$ whenever $N < j < M$.
Define now $\tilde{x}$ on $K_M$. If $t \in D(M,j)$ for some $j < M$, set $x(t) = C^{j-M} x(\phi_{Mj}(t))$. Note that $x$ is well-defined on $\cup_{j < M} D(M,j)$: if $t \in D(M,j) \cap D(M,i)$, then $C^{j-M} x(\phi_{Mj}(t)) = C^{i-M} x(\phi_{Mi}(t))$.
Also, for such $t$, $|x(t)| \leq \max_{1 \leq i \leq N} C^{i-M} \sup_{K_i} |x|$.

Moreover, $\tilde{x}$ is continuous on the closed set $D(M,j)$ for every $j<M$, and thus also on $\cup_{j < M} D(M,j)$. Extend $\tilde{x}$ to a continuous function on $K_M$ without increasing the $\sup$-norm.


Finally, set $\tilde{x}(\infty) = 0$. The function $\tilde{x}$ thusly defined belongs to $X$. Indeed, it is continuous on each of the sets $K_n$, and also at $\infty$, given that $\sup_{K_j} |\tilde{x}| \leq {\mathrm{const}} \cdot C^{-j}$. Finally, if $t \in D(n,m)$, then $\tilde{x}(t) = C^{m-n} \tilde{x}(\phi_{nm}(t))$. \\

(2) 
Modify the recursive process from part (1).
Suppose $\tilde{x}|_{K_1 \cup \ldots \cup K_{M-1}}$, where $M-1 \geq N$, has already been defined in such a way that $0 \leq \tilde{x} \leq y|_{K_1 \cup \ldots \cup K_{M-1}}$
on ${K_1 \cup \ldots \cup K_{M-1}}$ and
$\sup_{K_j} \tilde{x} \leq \max_{1 \leq i \leq N} C^{i-j} \sup_{K_i} x$ whenever $N < j < M$.
Define now $\tilde{x}$ on $K_M$. If $t \in D(M,j)$ for some $j < M$, set $x(t) = C^{j-M} x(\phi_{Mj}(t))$.
As before, observe that $x$ is well-defined on $\cup_{j < M} D(M,j)$.
Clearly, for $t \in D(M,j)$,
$$
0 \leq \tilde{x}(t) \leq y(t) ,
{\textrm{  and   }}
\tilde{x}(t) \leq \max_{1 \leq i \leq N} C^{i-M} \sup_{K_i} x .
$$
Also, $\tilde{x}|_{\cup_{j < M} D(M,j)}$ is continuous.
Therefore, we can find $u \in C(K_M)$
so that
$$
\sup_{K_M} |u| = \sup_{\cup_{j < M} D(M,j)} |\tilde{x}| \leq \max_{1 \leq i \leq N} C^{i-M} \sup_{K_i} |x|.
$$
To define $\tilde{x}$ on $K_M$, set $\tilde{x} = u \wedge y$.
\end{proof}

We shall use the notation $K_n' = K_n \backslash (\cup_{m < n} D(n,m))$, and  $K' = \cup_n K_n'$ (note that these sets are open).

In a manner similar to the preceding lemma, one can prove:

\begin{lemma}\label{l:extend_from_two roots}
Suppose $m \leq n$, $t \in K_m'$, $s \in K_n'$, and $U \subset K_m'$, $V \in K_n'$ are disjoint open sets with the property that $t \in U \subset \overline{U} \subset K_m'$ and $s \in V \subset \overline{V} \subset K_n'$. Then for $\alpha, \beta \in [0,\infty)$, there exists $x \in X_+$ so that:
\begin{enumerate}
 \item 
 For $j < m$, $x|_{K_j} = 0$.
 \item
 $x(t) = \alpha$, $x(s) = \beta$, $x \leq \alpha$ on $U$, and $x \leq \beta$ on $V$.
 \item
 If $m < n$, then $x|_{K_m \backslash U} = 0$.
 \item
 If $m < n$, then for $m < j < n$, $0 \leq x|_{K_j} \leq C^{m-j} \alpha$.
 \item
 On $K_n$, $0 \leq x \leq C^{m-n} \alpha \vee \beta$. 
 \item
 For $j > n$, $0 \leq x|_{K_j} \leq (C^{m-j} \alpha )\vee( C^{n-j} \beta$).
\end{enumerate}
\end{lemma}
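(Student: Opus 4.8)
The plan is to realize $x$ as a lattice join $x = x_A \vee x_B$ of two positive elements of $X$, where $x_A$ carries the value $\alpha$ at $t$ and $x_B$ carries the value $\beta$ at $s$, each produced by the upward consistent extension of Lemma \ref{l:consistent_extension_down}. Since $X$ is a sublattice of $C_0(K)$, the join $x_A \vee x_B$ again lies in $X_+$, so it suffices to arrange that, at level $m$, $x_A$ is supported inside $U$, that $x_A$ vanishes on $\overline{V}$, and that $x_B$ lives on level $n$ and above and is supported in $V$ there; then no interference occurs at the two prescribed points. Throughout I use that each $D(j,i)$ is closed (Lemma \ref{l:homeo-subsets}), so that $F_j := \cup_{i<j} D(j,i)$ is closed and $K_j' = K_j \setminus F_j$ is open, and that $\overline{U} \subset K_m'$, $\overline{V} \subset K_n'$ are disjoint from $F_m$, $F_n$ respectively.

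Construction of $x_A$: on $K_m$ choose, by Urysohn's lemma, a function $0 \le f_m \le \alpha$ with $f_m(t) = \alpha$ and $\supp f_m \subset U$; since $\overline{U}$ is disjoint from $F_m$, the function that is $f_m$ on $K_m$ and $0$ on $K_1 \cup \ldots \cup K_{m-1}$ is consistent. I then extend it upward, level by level, exactly as in the proof of Lemma \ref{l:consistent_extension_down}: at a level $j > m$ the consistency relations fix $x_A$ on the closed set $F_j$, and the forced value at $p \in D(j,i)$ equals $C^{i-j} x_A(\phi_{ji}(p))$ with $|x_A(\phi_{ji}(p))| \le C^{m-i}\alpha$, hence lies in $[0, C^{m-j}\alpha]$; a positive continuous extension bounded by $C^{m-j}\alpha$ therefore exists. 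The one extra demand occurs at level $n$: when $n > m$, the disjoint closed sets $F_n$ and $\overline{V}$ let me, via Tietze's theorem (and truncation to $[0, C^{m-n}\alpha]$), extend the forced values from $F_n$ to all of $K_n$ while additionally requiring the extension to vanish on $\overline{V}$; when $n = m$ this is automatic, since $\supp f_m \subset U$ and $U \cap \overline{V} = \emptyset$. This yields $x_A \in X_+$ with $x_A|_{K_m} = f_m$, $x_A = 0$ below level $m$, $0 \le x_A|_{K_j} \le C^{m-j}\alpha$ for $j > m$, and $x_A|_{\overline{V}} = 0$, in particular $x_A(s) = 0$.

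Construction of $x_B$ and verification: on $K_n$ pick, again by Urysohn, $0 \le f_n \le \beta$ with $f_n(s) = \beta$ and $\supp f_n \subset V$; as $\overline{V} \cap F_n = \emptyset$, the extension of Lemma \ref{l:consistent_extension_down} gives $x_B \in X_+$ with $x_B = 0$ below level $n$, $x_B|_{K_n} = f_n$, and $0 \le x_B|_{K_j} \le C^{n-j}\beta$ for $j > n$. Setting $x = x_A \vee x_B$, the six requirements are read off directly. Below level $m$ both terms vanish, giving (1). On $K_m$ one has $x_B = 0$ (it vanishes below level $n \ge m$, and when $m = n$ it is supported in $V$, disjoint from $U$), while $x_A = f_m$ is supported in $U$; this gives (3), and together with $x_A(t) = \alpha$ and $f_m \le \alpha$ it gives the assertions of (2) at $t$ and on $U$. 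At $s$ we have $x_A(s) = 0$ and $x_B(s) = \beta$, while $x_A = 0$ and $x_B = f_n \le \beta$ on $V$, giving the remaining parts of (2). Finally the level bounds (4)-(6) follow by taking the maximum of the two towers' bounds: on $K_j$ with $m < j < n$ only $x_A$ contributes, giving $x \le C^{m-j}\alpha$; on $K_n$ one gets $x \le (C^{m-n}\alpha) \vee \beta$; and for $j > n$ one gets $x \le (C^{m-j}\alpha) \vee (C^{n-j}\beta)$.

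The main obstacle is the construction of $x_A$ at level $n$: I must honor the consistency relations inherited from all lower levels (which pin $x_A$ down on the closed set $F_n$) while simultaneously forcing $x_A$ to vanish on $\overline{V}$, without exceeding the bound $C^{m-n}\alpha$ or losing positivity. This is feasible precisely because $\overline{V} \subset K_n'$ is disjoint from $F_n$, so the two prescriptions are imposed on disjoint closed subsets of the compact metric space $K_n$ and can be reconciled by Tietze's theorem; the same disjointness, namely $s \notin F_n$, is what allows $x_A(s)$ to be set to $0$, which is the one thing ensuring that the join $x_A \vee x_B$ attains the exact value $\beta$, rather than something larger, at $s$.
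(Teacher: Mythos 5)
Your proof is correct. It differs from the paper's in its organization, though not in its essential content. The paper builds a \emph{single} consistent tower: zeros below level $m$, a Urysohn bump in $U$ at level $m$ with peak $\alpha$ at $t$, consistent extensions via Lemma~\ref{l:consistent_extension_down} for $m<j<n$, and then a handcrafted level-$n$ slice that \emph{simultaneously} takes the values forced by consistency on $\cup_{j<n}D(n,j)$ and forms a bump of height $\beta$ at $s$ inside $V$ (vanishing on $\partial V$) --- possible precisely because $\overline{V}$ is disjoint from the closed set $\cup_{j<n}D(n,j)$; levels $j>n$ are then filled in by Lemma~\ref{l:consistent_extension_down} again. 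You instead build two positive elements, one tower per bump, and combine them with the lattice join $x=x_A\vee x_B$, using that $X$ is a sublattice of $C_0(K)$. What the join buys you is mechanical bookkeeping: each of the bounds (1)--(6) is just the maximum of the two towers' individual bounds, and the case $m=n$ needs no separate treatment. What it does not spare you is the real work: $x_A$ must vanish on $\overline{V}$ at level $n$ (otherwise $x_A\vee x_B$ could exceed $\beta$ at $s$ and on $V$), and arranging this forces you to reopen the recursive construction behind Lemma~\ref{l:consistent_extension_down} and invoke exactly the same disjointness $\overline{V}\cap\big(\cup_{j<n}D(n,j)\big)=\emptyset$ at exactly the spot where the paper does its patching --- you correctly flag this as the main obstacle. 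So the two arguments are of equal depth; yours trades the paper's merged level-$n$ slice for a cleaner verification step. One terminological caution: what you call the ``upward'' extension is what the paper calls the ``downward'' one (Lemma~\ref{l:consistent_extension_down}, extension to higher indices $j$); the paper's ``upward'' Lemma~\ref{l:consistent_extension_up} goes the other way. Your citation points to the right lemma, so this is cosmetic, but the labels should be aligned with the paper's usage.
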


\begin{proof}
We shall consider the case of $m < n$ (that of $m = n$ is handled similarly).
In light of Lemma \ref{l:consistent_extension_down}, it suffices to construct a consistent family of functions $x_j \in C(K_j)$, with $j \leq n$, satisfying the properties listed above. For $j < m$, simply set $x_j = 0$.
Define $x_m \in C(K_m)_+$ which vanishes outside of $U$ and satisfies $0 \leq x \leq \alpha = x(t)$.

Use Lemma \ref{l:consistent_extension_down} to find  $x_j \in C(K_j)$ so that the family $(x_j)_{j < n}$ is consistent and $x_j \leq C^{m-j} \alpha$.

Define $x_n \in C(K_n)$ in such a way that:
\begin{enumerate}
 \item $x_n = 0$ on $\partial V$, and $0 \leq x_n \leq \beta = x_n(s)$  on $V$.
 \item $x_n(t) = C^{j-n} x_j(\phi_{nj}(t))$ whenever $t \in D(n,j)$ for some $j < n$.
\end{enumerate}
Such a function $x_n$ exists, since $\overline{V}$ is disjoint from $\cup_{j < n} D(n,j)$. Furthermore, the family $(x_j)_{j \leq n}$ is consistent. To define $x_j$ for $j > n$, again invoke Lemma \ref{l:consistent_extension_down}.
\end{proof}

Next we consider ``upward'' extensions.

\begin{lemma}\label{l:consistent_extension_up}
(1) If $L \leq N$, and $x \in C(K_L \cup \ldots \cup K_N)$ is a consistent function, then there exists a consistent $\tilde{x} \in C(K_1 \cup \ldots \cup K_N)$ so that $\tilde{x}|_{K_L \cup \ldots \cup K_N} = x$, and for $j < L$, $\sup_{K_j} |\tilde{x}| \leq \max_{L \leq i \leq N} C^{i-j} \sup_{K_i} |x|$.

(2) If, furthermore, $y \in X_+$ is such that $0 \leq x \leq y$ on $K_L \cup \ldots \cup K_N$, then $\tilde{x}$ can be selected in such a way that, in addition, $0 \leq \tilde{x} \leq y$.
\end{lemma}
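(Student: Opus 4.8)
The plan is to mirror the recursive construction of Lemma \ref{l:consistent_extension_down}, but to run the induction \emph{downward} in the index. Starting from $\tilde{x} = x$ on $K_L \cup \ldots \cup K_N$, I would define $\tilde{x}$ on $K_{L-1}$, then on $K_{L-2}$, and so on down to $K_1$, at each stage using only the values already assigned on the higher-indexed sets. Suppose $\tilde{x}$ has been defined consistently on $K_M \cup \ldots \cup K_N$ (with $M \leq L$) subject to the bound $\sup_{K_j}|\tilde{x}| \leq \max_{L \leq i \leq N} C^{i-j}\sup_{K_i}|x|$ for $M \leq j < L$. To define $\tilde{x}$ on $K_{M-1}$, I would first prescribe its values on the forced set $\cup_{M \leq j \leq N} D(M-1,j)$ by setting $\tilde{x}(t) = C^{j-(M-1)}\tilde{x}(\phi_{M-1,j}(t))$ whenever $t \in D(M-1,j)$, and then extend continuously to all of $K_{M-1}$.

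The central point is that these prescribed values are well defined and continuous. If $t \in D(M-1,i) \cap D(M-1,j)$ with $M \leq i,j \leq N$, then $\phi_{M-1,i}(t)$ and $\phi_{M-1,j}(t)$ are linked to one another, so the already-established consistency of $\tilde{x}$ on $K_M \cup \ldots \cup K_N$ forces $C^{i-(M-1)}\tilde{x}(\phi_{M-1,i}(t)) = C^{j-(M-1)}\tilde{x}(\phi_{M-1,j}(t))$, making the definition unambiguous. Each $D(M-1,j)$ is closed and $\phi_{M-1,j}$ is a homeomorphism by Lemma \ref{l:homeo-subsets}, and since only the finitely many indices $j \in \{M, \ldots, N\}$ occur, the prescribed function is continuous on the closed set $\cup_j D(M-1,j)$. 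The forced values obey $|\tilde{x}(t)| \leq C^{j-(M-1)}\sup_{K_j}|\tilde{x}| \leq \max_{L \leq i \leq N} C^{i-(M-1)}\sup_{K_i}|x|$ (using $\sup_{K_j}|\tilde{x}| = \sup_{K_j}|x|$ for $j \geq L$ and the inductive bound for $M \leq j < L$), so a Tietze extension produces $\tilde{x} \in C(K_{M-1})$ respecting the same sup bound. A short case analysis on pairs $(m',n')$ having one entry equal to $M-1$ (the case $n' = M-1$ reducing by symmetry to the case $m' = M-1$) then confirms that $\tilde{x}$ stays consistent on $K_{M-1} \cup \ldots \cup K_N$, closing the inductive step. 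Because we descend through only the finitely many sets $K_{L-1}, \ldots, K_1$ and never touch $\infty$ or the sets $K_j$ with $j > N$, the compactness and vanishing-at-infinity concerns present in Lemma \ref{l:consistent_extension_down} do not arise here.

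For part (2), I would carry the additional invariant $0 \leq \tilde{x} \leq y$ through the recursion. On the forced set the bound is automatic: since $y \in X_+$ satisfies the same consistency relation $y(t) = C^{j-(M-1)}y(\phi_{M-1,j}(t))$ and $C^{j-(M-1)} > 0$, the inductive inequality $0 \leq \tilde{x}(\phi_{M-1,j}(t)) \leq y(\phi_{M-1,j}(t))$ scales to $0 \leq \tilde{x}(t) \leq y(t)$. To extend to all of $K_{M-1}$ I would take a Tietze extension $u \in C(K_{M-1})$ of the forced values with $0 \leq u \leq \max_{L \leq i \leq N} C^{i-(M-1)}\sup_{K_i}|x|$, and then set $\tilde{x} = u \wedge y$; this is nonnegative, bounded above by $y$, agrees with the forced values where $u \leq y$, and preserves the sup bound, exactly as in Lemma \ref{l:consistent_extension_down}(2). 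The one thing requiring care throughout is the well-definedness and the persistence of global consistency across \emph{all} index pairs, not merely those adjacent to the newly defined set; this is the only genuinely nontrivial point, and it follows cleanly from the symmetry of the sets $D(m,n)$ and the maps $\phi_{mn}$ recorded in Lemma \ref{l:homeo-subsets}.
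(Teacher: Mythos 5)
Your proposal is correct and follows essentially the same route as the paper's own proof: a downward-in-index recursion that prescribes the forced values $C^{j-(M-1)}\tilde{x}(\phi_{M-1,j}(t))$ on the closed sets $D(M-1,j)$, checks well-definedness and continuity, and extends by Tietze without increasing the sup-norm, with part (2) handled by the same $u \wedge y$ truncation as in Lemma \ref{l:consistent_extension_down}(2). In fact you spell out two points the paper leaves implicit (the linkage argument for well-definedness and the persistence of consistency across all index pairs), which only strengthens the write-up.
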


\begin{proof}
We only prove (1), as (2) is handled similarly (compare with the proof of Lemma \ref{l:consistent_extension_down}).

Define $\tilde{x}$ recursively. Suppose $\tilde{x}|_{K_{M+1} \cup \ldots \cup K_N}$ ($M+1 \leq L$) has already been defined in such a way that $\sup_{K_j} |\tilde{x}| \leq \max_{L \leq i \leq N} C^{i-j} \sup_{K_i} |x|$ whenever $M < j < N$.
Now define $\tilde{x}$ on $K_M$. If $t \in D(M,j)$ for some $j \in \{M+1, \ldots, N\}$, set $x(t) = C^{j-M} x(\phi_{Mj}(t))$. Note that $x$ is well-defined on $\cup_{N \leq j < M} D(M,j)$: if $t \in D(M,j) \cap D(M,i)$, then $C^{j-M} x(\phi_{Mj}(t)) = C^{i-M} x(\phi_{Mi}(t))$.
Also, for such $t$, $|x(t)| \leq \max_{1 \leq i \leq N} C^{i-M} \sup_{K_i} |x|$. 

As $\tilde{x}|_{\cup_{M < j \leq N} D(M,j)}$ defined above is continuous, we can extend it to the whole $K_M$, without increasing the $\sup$-norm.
\end{proof}

\subsection{Atoms in a Benyamini space}\label{ss:atoms_benyamini}

\begin{definition}\label{d:hereditarily_isolated}
A point $k \in K'$ is called \emph{hereditarily isolated} if it is an isolated point of $K_n'$ for some $n \in \N$, and $\phi_{nm}(k)$ is isolated in $K_m$ whenever $k \in D(n,m)$.
\end{definition}

For a point $k$ like this, we can define a function $\theta_k \in X$ by setting $\theta_k(k) = 1$, $\theta_k(\phi_{nm}(k)) = C^{n-m}$ whenever $k \in D(n,m)$, and $\theta_k(t) = 0$ otherwise. Clearly $\theta_k$ is a normalized atom in $X$. Our next result claims that all atoms in $X$ are of this form.

\begin{proposition}\label{p:atomes_in_benyamini}
 If $x \in X$ is a normalized atom, then $x = \theta_k$ for some hereditarily isolated point $k$.
\end{proposition}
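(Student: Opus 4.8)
The plan is to show that a normalized atom $x$ (which is positive, with $\sup x = \|x\| = 1$) is supported exactly on the $\approx$-orbit of a single point $k \in K'$, where $t \approx s$ means $x'(t) = C^{(\text{level of }s)-(\text{level of }t)}x'(s)$ for all $x' \in X$; I then read off that $k$ is hereditarily isolated and that $x = \theta_k$.

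First I would reduce everything to $K'$. Every point lies over a point of $K'$: if $t \in K_p \setminus K_p'$ then $t \in D(p,q)$ for some $q < p$ and $x(t) = C^{q-p}x(\phi_{pq}(t))$ with $\phi_{pq}(t) \in K_q$, so iterating (the level strictly decreases and is bounded below) terminates at some $K_l'$. Using the composition property of the maps $\phi$, which is immediate from the ``for all $x \in X$'' clause of Definition \ref{d:benyamini}, together with the injectivity from Lemma \ref{l:homeo-subsets} and point separation, I would check that each $\approx$-class meets each $K_m$ in at most one point (two such points would be identified by every $x' \in X$, hence equal by separation) and meets $K'$ in exactly one point. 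Thus $x$ is determined by $x|_{K'}$, and $\supp x$ is a single $\approx$-orbit.

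The key step is the claim that $x > 0$ at exactly one point of $K'$. Suppose instead $x(t_1), x(t_2) > 0$ at distinct $t_1 \in K_m'$, $t_2 \in K_n'$ with $m \leq n$. Since $K'$ is open in the metrizable (hence normal) space $K$, I can pick disjoint open $U \ni t_1$, $V \ni t_2$ with $\overline{U} \subset K_m'$ and $\overline{V} \subset K_n'$. Applying Lemma \ref{l:extend_from_two roots} with $\alpha$ large and $\beta = 0$ yields $z \in X_+$ with $z(t_1) = \alpha > x(t_1)$ and $z \equiv 0$ on $V$, so $z(t_2) = 0$. Then $w := x \wedge z \in X$ satisfies $0 \leq w \leq x$, with $w(t_1) = x(t_1) > 0$ but $w(t_2) = 0$; comparing $w/x$ at $t_1$ (ratio $1$) and at $t_2$ (ratio $0$) shows $w$ is not a scalar multiple of $x$, contradicting atomicity. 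Hence $x|_{K'}$ is concentrated at one point $k \in K_n'$, and by the previous paragraph $\supp x = \{k\} \cup \{\phi_{nm}(k) : k \in D(n,m)\}$, with $x(\phi_{nm}(k)) = C^{n-m}x(k)$ there.

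Finally I would extract the stated form from continuity. Since $k \in K_n'$ it is tied only to strictly higher levels $m > n$, where $C^{n-m} < 1$; hence $x$ attains its maximum at $k$, forcing $x(k) = \|x\| = 1$ and $x(\phi_{nm}(k)) = C^{n-m}$, i.e.\ $x = \theta_k$. For hereditary isolation: were $k$ a limit of points $k_j \in K_n' \setminus \{k\}$, then $x(k_j) = 0$ would contradict $x(k_j) \to x(k) = 1$; and were some $s = \phi_{nm}(k) \in K_m$ a limit of $s_j \in K_m \setminus \{s\}$, then (as $s$ is the only orbit point in $K_m$) $x(s_j) = 0$ would contradict $x(s_j) \to x(s) = C^{n-m} > 0$. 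I expect the main obstacle to be the single-point claim, namely invoking Lemma \ref{l:extend_from_two roots} to build the splitting function $z$ and verifying that $x \wedge z$ genuinely fails to be proportional to $x$; the orbit bookkeeping and the continuity arguments are then routine.
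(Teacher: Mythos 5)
Your proposal is correct, but it is organized differently from the paper's proof, and the difference is worth recording. Both arguments turn on the same underlying principle: atomicity of $x$ fails as soon as one exhibits $w \in [0,x]$ taking non-proportional values at two points, and the extension machinery of Subsection \ref{ss:extensions} supplies such a $w$. The paper runs this principle against \emph{isolation}: it fixes the maximum point $k \in K_n'$ with $x(k)=1$, supposes some orbit point $\phi_{nm}(k)$ is not isolated in $K_m$, picks two distinct nearby points $a_1, a_2 \in K_m$ carrying positive values of $x$, and uses Proposition \ref{p:consistent_extension}(2) --- the dominated-extension statement --- to extend a function $0 \leq y \leq x|_{K_m}$ with $y(a_1) > 0 = y(a_2)$ to $\tilde{y} \in [0,x]$, which cannot be proportional to $x$. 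You run the principle against the \emph{support} instead: two points of $K'$ where $x>0$ are separated by the function $z$ of Lemma \ref{l:extend_from_two roots} (taken with $\beta = 0$), and the meet $w = x \wedge z$ --- legitimate, since $X$ is a sublattice of $C(K)$, so meets are computed pointwise --- lies in $[0,x]$ and is not proportional to $x$. Each route buys something. Yours makes the conclusion $x = \theta_k$ fully explicit (the paper's proof, read literally, establishes the hereditary isolation of $k$ and leaves the identification $x=\theta_k$ implicit), it derives hereditary isolation afterwards from mere continuity of $x$, and the meet trick lets you bypass the dominated-extension clause of Proposition \ref{p:consistent_extension}(2) entirely, since Lemma \ref{l:extend_from_two roots} rests only on the undominated extension. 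The price is the orbit bookkeeping of your first paragraph, which the paper's isolation-first argument never needs to spell out; that bookkeeping is sound as you describe it: point separation within each $K_n$ (Definition \ref{d:benyamini}(3)) shows each $\approx$-class meets each $K_n$ at most once, property (4) pins the constants to powers of $C$, and the descent through the sets $D(p,q)$ terminates because the level strictly decreases, so each class meets $K'$ exactly once.
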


\begin{proof}
 Suppose $x \in X$ is a normalized atom. Find $k \in K_n'$ such that $x(k) = 1$. We now prove that $k$ is a hereditarily isolated point and that $x = \theta_k$. In particular, we must show that if $k \in D(n,m)$, then $\phi_{nm}(x)$ is isolated in $K_m$ (note that here, $m \geq n$ necessarily).
 
 Suppose, for the sake of contradiction, that $k_m = \phi_{nm}(k)$ is not isolated in $K_m$ for some $m$. Find the smallest such $m$.
 Find distinct $a_1, a_2 \in K_m$ so that $x(a_1), x(a_2) > 1/2$. 
 Find $y \in C(K_m)$ so that $0 \leq y \leq x|_{K_m}$, $y_1(a_1) = \frac12$, and $y(a_2) = 0$.
 By Proposition \ref{p:consistent_extension}, there exists $\tilde{y} \in [0,x] \subset X$ such that $\tilde{y}|_{K_m} = y$. By our choice of $y$, $\tilde{y}$ cannot be a scalar multiple of $x$.
 Thus $x$ is not an atom, which is the desired contradiction.
\end{proof}

	\subsection{The dual of a Benyamini space}\label{ss:dual}

\begin{lemma}\label{l:am-dual}
	Let $X$ and $K'$ be as above.  Then $X^*$ is lattice isometric to $M(K')$.
\end{lemma}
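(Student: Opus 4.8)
The plan is to identify $X^*$ with $M(K')$, the space of (finite, signed, regular Borel) measures on the open set $K' = \cup_n K_n'$, where $K_n' = K_n \setminus (\cup_{m<n} D(n,m))$. The key observation driving this is that $K'$ is precisely a \emph{transversal} for the gluing relation defining a Benyamini space: every point of $K$ (other than $\infty$) is identified, via the maps $\phi_{nm}$, with exactly one point of some $K_n'$, since a point $t \in K_m$ lying in $D(m,n)$ for some $n < m$ is the image of $\phi_{nm}$ of a point higher up in the hierarchy. Thus a consistent function on $K$ — which is what an element of $X$ essentially is — is \emph{freely determined} by its restriction to $K'$, the constraints of Definition \ref{d:benyamini}(4) being used up exactly on $K \setminus K'$.

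\emph{First} I would make precise the claim that restriction to $K'$ gives a lattice isometry from $X$ onto a sublattice of $C_0(K')$ (or a suitable quotient), and — crucially — that this restriction map is \emph{onto} a large enough space, or at least has dense range in the appropriate sense. The extension machinery of Subsection \ref{ss:extensions}, especially Proposition \ref{p:consistent_extension}, is what makes this work: any function prescribed on finitely many $K_n'$ (consistently, though on $K'$ consistency is automatic since the $K_n'$ avoid the gluing sets $D(n,m)$) extends to a genuine element of $X$ with controlled norm on the remaining pieces. So restriction $R : X \to C_0(K')$ is an isometric lattice embedding with range dense enough that its dual captures all of $M(K')$.

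\emph{Next} I would pass to duals. A functional $\varphi \in X^*$ pulls back, via the embedding $R$, to a functional on a subspace of $C_0(K')$; by Hahn--Banach and the Riesz representation theorem, every element of $M(K')$ gives an element of $X^*$ of the same norm by integrating the $K'$-restriction, and conversely every $\varphi \in X^*$ arises this way. The norm-preservation ($\|\varphi\|_{X^*} = \|\mu\|_{M(K')}$) is where the extension lemmas pay off again: given a measure $\mu$ supported (essentially) on finitely many $K_n'$, one tests it against functions in $\ball(X)$ whose $K'$-restriction realizes the total variation, and Proposition \ref{p:consistent_extension}(2) guarantees such test functions exist inside $\ball(X)$ without the extension inflating the norm on the glued-down pieces. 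One must also verify that the order structure matches, i.e.\ that $R$ is a lattice homomorphism so that positive functionals correspond to positive measures, making the identification a \emph{lattice} isometry.

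\textbf{The main obstacle} I expect is the surjectivity and norm-exactness of the duality in the direction $M(K') \to X^*$, specifically controlling the $\sup$-norm cost of extending a $K'$-function downward and upward across \emph{all} the pieces $K_j$ simultaneously. A measure $\mu$ may have mass spread over infinitely many $K_n'$, so a single test function in $\ball(X)$ realizing $\|\mu\|$ to within $\vr$ must be built by a limiting/truncation argument: approximate $\mu$ by a finitely-supported measure, extend the optimal test function using Proposition \ref{p:consistent_extension}, and check that the extension's values on the un-prescribed pieces decay like $C^{-j}$ (which they do, by the $\max_i C^{i-j}$ bound) so that they lie in $C_0(K)$ and do not spoil the norm estimate. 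Reconciling this truncation with the total-variation norm on $K'$, together with the regularity of the measures needed to make the Riesz representation literal, is the delicate point; everything else is a fairly direct application of the already-established extension results.
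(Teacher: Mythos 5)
Your high-level plan (a contraction $M(K') \to X^*$, surjectivity with norm control, then the order structure) matches the paper's, but the step on which everything rests is false: the restriction map $R : X \to C_0(K')$, $x \mapsto x|_{K'}$, does \emph{not} take values in $C_0(K')$. If $s \in D(m,n)$ ($n<m$) is a non-isolated point of $K_m$ and $x \in X$ has $x(r) \neq 0$ at the root $r = \phi_{mn}(s) \in K_n'$, then $|x| \approx C^{n-m}|x(r)| > 0$ on a punctured neighborhood of $s$ inside $K_m'$, while $s \notin K'$; so the set $\{t \in K' : |x(t)| \geq \varepsilon\}$ fails to be compact in $K'$, and $x|_{K'}$ lies only in $C_b(K')$. (A two-piece example with one glued point already exhibits this.) Consequently your key step --- Hahn--Banach extension of $\varphi \in X^*$ followed by Riesz representation ``on $K'$'' --- does not produce an element of $M(K')$: the extension lives in $C_b(K')^* = M(\beta K')$, and the mass sitting over $\beta K' \setminus K'$ (morally, at the gluing points) is exactly what you cannot wish away. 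This is why the paper argues differently: it extends $\varphi$ to $C(K)$ (legitimate, since $X$ is a sublattice of $C(K)$ with $K$ compact), obtains $\mu \in M(K)$ with $\|\mu\| = \|\varphi\|$, and then shows that mass on $D(n,j)$ can be replaced by the weighted pushforward $C^{j-n}\, \mu|_{D(n,j)} \circ \phi_{jn}$ onto $D(j,n)$, which represents the same functional with \emph{strictly smaller} total variation; minimality of $\|\mu\|$ then forces $\mu$ to vanish off $K'$. That mass-transport argument is the missing idea in your sketch.

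A second genuine error: your parenthetical ``on $K'$ consistency is automatic since the $K_n'$ avoid the gluing sets'' is wrong. Restrictions to $K'$ of elements of $X$ satisfy boundary conditions --- as points of $K_m'$ approach $s \in D(m,n)$, the values are forced to tend to $C^{n-m}$ times the value at the root --- so $R(X)$ is far from dense, and an arbitrary near-optimal test function prescribed on $K_1' \cup \dots \cup K_N'$ need not extend to \emph{any} element of $X$ (Proposition \ref{p:consistent_extension} requires a consistent function on the full pieces $K_n$, not on the $K_n'$). To build admissible test functions one must first use regularity of $\mu$ to make them vanish on neighborhoods of $\cup_{i<j} D(j,i)$ carrying small mass; this is precisely the role of the sets $U_j$ in the paper's proof, and it is absent from yours. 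Finally, ``verify that $R$ is a lattice homomorphism'' only gives the easy direction (positive measures yield positive functionals); the converse direction of bipositivity either needs the paper's explicit construction of a positive test function on which a non-positive $\mu$ acts negatively, followed by \cite{Ab}, or must be deduced from full norm-preservation of the identification via a Jordan-decomposition argument --- neither of which your proposal supplies.
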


\begin{proof}
Any measure on $K'$ determines a linear functional on $X$; this gives rise to a contraction ${\mathbf{i}} : M(K') \to X^*$. We prove that ${\mathbf{i}}$ is a surjective isometry by showing that any $x^* \in X^*$
can be represented by $\mu \in M(K')$ with $\|\mu\| \leq \|x^*\|$.
By the Hahn-Banach Theorem, $x^*$ extends to a functional on $C(K)$ of the same norm; the latter is implemented by a measure $\mu \in M(K)$, with $\|\mu\| = \|x^*\|$. By removing a point mass at $\infty$, we can and do assume that $\mu$ lives on $\cup_n K_n$.

We claim that $\mu$ vanishes on $K \backslash K'$. Indeed, otherwise find the smallest value of $n$ for which $\mu$ does not vanish on $K_n \backslash K_n'$; then $\mu|_{\cup_{j < n} D(n,j)} \neq 0$. Find the smallest $j$ so that $\mu|_{D(n,j)} \neq 0$. Then the measure
$$
\mu' = \mu - \mu \big|_{D(n,j)} + C^{j-n} \mu \big|_{D(n,j)} \circ \phi_{jn} 
$$
implements the same functional $x^*$; here, for $x \in C(K)$, we define $\big[\mu \big|_{D(n,j)} \circ \phi_{jn}\big](x)$ to be $\mu \big|_{D(n,j)} \big(x\big|_{D(j,n)} \circ \phi_{nj}\big)$.
Note that $\mu'(E) = \mu(E) + C^{j-n} \mu(\phi_{jn} (E))$ for $E \subset D(j,n)$, $\mu'(E) = 0$ for $E \subset D(n,j)$, and $\mu'(E) = \mu(E)$ if $E$ is disjoint from $D(n,j) \cup D(j,n)$.
Furthermore,  $\mu'|_{K_m} = \mu|_{K_m}$ for $m \notin \{j, n\}$, $\mu'|_{K_n} = \mu|_{K_n \backslash D(n,j)}$, and $\mu'|_{K_j} = \mu|_{K_j} + C^{j-n} \mu \big|_{D(n,j)} \circ \phi_{jn}$.
It follows that
$$
\| \mu'|_{K_n} \| = \| \mu|_{K_n} \| - \| \mu|_{D(n,j)} \| ,
$$
while
$$
\| \mu'|_{K_j} \| \leq \| \mu|_{K_j} \| + C^{j-n} \| \mu|_{D(n,j)} \| ,
$$
Therefore,
\begin{align*}	
\|\mu'\|&  = \sum_i \|\mu'|_{K_i}\| =
\| \mu'|_{K_n} \| + \| \mu'|_{K_j} \|  + \sum_{i \notin \{j,n\}} \|\mu'|_{K_i}\| \\
& \leq (C^{j-n} - 1 )\| \mu|_{D(n,j)} \| + \sum_i \|\mu|_{K_i}\| <
\sum_i \|\mu|_{K_i}\| = \|x^*\| ,
\end{align*}
a contradiction.

It is clear that the map ${\mathbf{i}}$ is positive (a positive measure generates a positive functional). We now show that ${\mathbf{i}}$ is bipositive: if $\mu \in M(K')$ is not a positive measure, then the corresponding functional is not positive either. We can write $\mu = (\mu_n)$, with $(\mu_n)$ concentrated on $K_n'$. Note that $\|\mu\| = \sum_n \|\mu_n\|$.

Find $N \in \N$ so that $\mu_n \geq 0$ for $n < N$, but $\mu_N$ is not positive. By the regularity of the measure $\mu_N$, we can find a positive $x_N \in C(K_N)$, vanishing on $\cup_{j<N} D(N,j)$, so that $\mu_N(x_N) < 0$. By scaling, we can and do assume that $\|x_N  \|_\infty = 1$.
Let $\delta = - \mu_N(x_N)/3$. Find $M > N$ so that $\sum_{j > M} C^{N-j} \|\mu_j\| < \delta$.

For $j < N$, let $x_j$ be the zero function on $K_j$. For $N< j \leq M$, find an open set $U_j \subset K_j$ containing $\cup_{i < j} D(j,i)$ with $\|\mu_j|_{U_j}\| < \delta/M$. 
Now use Lemma \ref{l:consistent_extension_down} to define, recursively, a consistent family of functions $x_j$
($j > N$) so that $\|x_j\| \leq C^{N-j}$ and $x_j$ vanishes outside of $U_j$ for $N < j \leq M$.
By our choice of $U_j$, we have $|\mu_j(x_j)| \leq \delta C^{N-j} /M$ for $N < j \leq M$; for $j > M$, we have $|\mu_j(x_j)| \leq \delta C^{N-j} \|\mu_j\|$. 
Merge all the $x_j$'s into a function $x \in X$. Then
\begin{align*}
\mu(x)
&
\leq
\mu_N(x_N) + \sum_{j > N} | \mu_j(x_j)| \leq -3\delta + \sum_{j=N+1}^M C^{N-j} \frac{\delta}{M} + \sum_{j > M} C^{N-j} \|\mu_j\|
\\
&
<
-3\delta + (M-N+1) \frac{\delta}{M} + \sum_{j > M} C^{N-j} \|\mu_j\| < -3\delta + \delta + \delta = - \delta ,
\end{align*}
which shows that the linear functional determined by $\mu$ is not positive.

We have established that ${\mathbf{i}} : M(K') \to X$ is a bipositive surjective isometry. By \cite{Ab}, ${\mathbf{i}}$ is a lattice isometry.
\end{proof}



We shall denote by $\A_1$ the set of normalized atoms of $X^*$. By Lemma \ref{l:am-dual}, $X^* = M(K')$, hence $\A_1 = \{\delta_t : t \in K'\} \subset \ball(X^*)_+$.
Below we show that $\A_1$ (equipped with the weak$^*$ topology inherited from $X^*$) is topologically homeomorphic to $K'$.


\begin{lemma}\label{l:sual-atoms}
The map ${\mathbf{j}} : K' \to \A_1 : t \mapsto \delta_t$ is a homeomorphism.
\end{lemma}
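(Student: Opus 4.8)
The plan is to show that the bijection $\mathbf{j} : K' \to \A_1$, $t \mapsto \delta_t$, is continuous in both directions, where $\A_1$ carries the weak$^*$ topology from $X^* = M(K')$. Since $K'$ is metrizable (as noted after Definition \ref{d:benyamini}, each $K_n$ is metrizable because $X$ is separable, and $K'$ is an open subset of $K$) and the weak$^*$ topology on bounded subsets of the dual of a separable space is also metrizable, I would work with sequences throughout, which is the most economical route.

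For continuity of $\mathbf{j}$, suppose $t_k \to t$ in $K'$. I must show $\delta_{t_k} \to \delta_t$ weak$^*$, i.e.\ $x(t_k) \to x(t)$ for every $x \in X$. But this is immediate: each $x \in X \subseteq C_0(K)$ is continuous on $K$, so $x(t_k) \to x(t)$ by continuity, and $\langle \delta_{t_k}, x\rangle = x(t_k)$. Hence $\mathbf{j}$ is continuous. This direction requires essentially nothing beyond the definitions.

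The substantive direction is continuity of $\mathbf{j}^{-1}$, equivalently showing $\mathbf{j}$ is an \emph{open} map, or (using metrizability and compactness-type arguments) that weak$^*$ convergence $\delta_{t_k} \to \delta_t$ forces $t_k \to t$ in $K'$. The natural approach is: assume $\delta_{t_k} \to \delta_t$ but $t_k \not\to t$, and derive a contradiction. Passing to a subsequence, either the $t_k$ escape every compact neighborhood of $t$, or they converge to some $s \neq t$ in $K$. In the first case I would use that $K'$ sits inside $K = (\cup_n K_n)\cup\{\infty\}$, so the only way to escape is toward $\infty$ or toward the boundary $K \setminus K'$; I would produce a function $x \in X$ separating the relevant behavior, invoking the separation property (3) of Definition \ref{d:benyamini} together with the extension results (Proposition \ref{p:consistent_extension} and Lemma \ref{l:extend_from_two roots}) to build an $x \in X$ with $x(t) \neq \lim x(t_k)$. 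In the second case, where $t_k \to s$ with $s \neq t$, I would again invoke that $X$ separates points on each $K_n$ (property (3)) to find $x \in X$ with $x(s) \neq x(t)$; since $x$ is continuous, $x(t_k) \to x(s) \neq x(t)$, contradicting $\delta_{t_k} \to \delta_t$.

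The main obstacle I anticipate is the escape-to-boundary scenario: a sequence $t_k \in K'$ could converge (in $K$) to a limit point $s$ lying in $K \setminus K'$, i.e.\ to a ``defining'' point that has been collapsed, or to $\infty$. Here separation alone on a single $K_n$ is insufficient, and I would need to use the extension lemmas to manufacture a function in $X$ whose value at $t$ differs from its limiting value along $(t_k)$, carefully respecting the consistency constraints that relate values across the sets $D(n,m)$. The cleanest framing is probably to argue that $K'$ being open in $K$ means $t \in K'$ has a neighborhood base of open sets $U$ with $\overline{U} \subseteq K'$; using Lemma \ref{l:extend_from_two roots} (or Proposition \ref{p:consistent_extension}) I can find $x \in X_+$ peaking at $t$ and small off $U$, so that $t_k \notin U$ eventually would force $x(t_k)$ away from $x(t)$, contradicting weak$^*$ convergence. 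Once both directions of continuity are in hand, $\mathbf{j}$ is a continuous bijection with continuous inverse, hence a homeomorphism, completing the proof.
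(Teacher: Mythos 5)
Your proposal is correct, but it proves the hard direction by a genuinely different route than the paper. The paper's argument is soft: given $\delta_{t_\alpha} \to \delta_t$ weak$^*$, it uses compactness of $K$ to pass to a subnet converging to some $s \in K$, deduces $x(s) = x(t)$ for all $x \in X$, rules out $s = \infty$ via property (2) of Definition \ref{d:benyamini} (possible since $x(t) \neq 0$ for some $x$), and then gets $s = t$ from properties (3) \emph{and} (4): property (4) forces $s$ and $t$ into the same $K_n$ (since $C^{n-m} = 1$ only when $n = m$), after which (3) separates them. No extension machinery is used. You instead construct, for each neighborhood $U \ni t$ with $\overline{U} \subset K_n'$, a bump $x \in X_+$ with $x(t) = 1$, vanishing on $K_n \setminus U$ and on $K_j$ for $j < n$, and --- crucially, since consistency prevents it from vanishing on higher-index sets --- satisfying $x \leq C^{n-j} \leq C^{-1} < 1$ on $K_j$ for $j > n$, via Proposition \ref{p:consistent_extension}; weak$^*$ convergence then forces $t_k \in U$ eventually, and such $U$ form a neighborhood base at $t$. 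This works, and it is essentially the localization technique the paper itself deploys later (in Lemma \ref{l:convergence_in_Km new}); it is quantitative and sequence-based (legitimate here, as $K$ is metrizable and $X$ separable), at the cost of invoking the extension lemmas, whereas the paper's subnet argument needs only the definition of a Benyamini space. One small correction to your intermediate case analysis: when $t_k \to s \neq t$ in $K$, property (3) alone yields a separating function only if $s$ lies in the same $K_n$ as $t$; across different $K_m$'s, or for collapsed points of $K \setminus K'$, you need property (4) as well --- exactly the role it plays in the paper's proof --- but your final framing via the bump function bypasses this issue and is self-contained.
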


\begin{proof}
To establish the continuity of ${\mathbf{j}}$, suppose the net $t_\alpha$ converges to $t$ in $K'$.
By continuity, $\delta_{t_\alpha}(x) = x(t_\alpha) \to x(t) = \delta_t(x)$ for any $x \in X$, hence $\delta_{t_\alpha} \to \delta_t$ in the weak$^*$ topology.

For the continuity of ${\mathbf{j}}^{-1}$, consider a net $(t_\alpha) \subset \A_1$ so that $\delta_{t_\alpha} \to \delta_t \in \A_1$ in the weak$^*$ topology -- that is, $x(t_\alpha) \to x(t)$ for any $x \in X$. 
By the compactness of $K$, it suffices to show that the limit of any convergent subnet of $(t_\alpha)$ is $t$.

Suppose $(t'_\beta)$ is a subnet of $(t_\alpha)$, which converges to $s \in K$.
Then for any $x \in X$, we have $x(s) =  \lim_\beta x(t'_\beta) = x(t)$.
As $x(t)$ is not always $0$, part (2) of Definition \ref{d:benyamini} implies $s \neq \infty$.
Further, $x(t) = x(s)$ for any $x \in X$, hence parts (3) and (4) of Definition \ref{d:benyamini} show that $t=s$.

%
%
%
%
\end{proof}

\section{Renormings of Benyamini spaces}\label{s:am_renormings_benyamini}

\begin{theorem}\label{t:new_theorem_about_Benyamini}
 Suppose $(X, \| \cdot \|)$ is a Benyamini space. Then, for any $c > 1$, $X$ can be equipped with an equivalent norm $\tri{ \cdot }$ so that $\| \cdot \| \leq \tri{ \cdot } \leq c^2\| \cdot \|$, so that the identity is the only lattice isometry on $(X, \tri{ \cdot })$.
 If $X$ has no more than one atom, then $\tri{ \cdot }$ can be selected to be an AM-norm.
\end{theorem}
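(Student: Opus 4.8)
The plan is to read off the shape of an arbitrary lattice isometry from the dual and then destroy it. Suppose $T$ is a lattice isometry of $(X,\tri{\cdot})$. Being a surjective lattice isomorphism, $T$ induces a lattice automorphism $T^*$ of $X^*=M(K')$ (Lemma \ref{l:am-dual}), which must carry normalized atoms to positive multiples of normalized atoms. By Lemma \ref{l:sual-atoms} these atoms are exactly $\{\delta_t:t\in K'\}$ and $t\mapsto\delta_t$ is a homeomorphism, so there exist a homeomorphism $\sigma$ of $K'$ and a continuous $\lambda\colon K'\to(0,\infty)$ with $T^*\delta_t=\lambda(t)\delta_{\sigma(t)}$; dualizing gives $Tx=\lambda\cdot(x\circ\sigma)$ on $K'$ (and hence on all of $K$ by consistency). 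Compatibility of $T$ with the tower structure of Definition \ref{d:benyamini} forces $\sigma$ to preserve each level $K_n'$ and to intertwine the gluing maps $\phi_{mn}$ of Lemma \ref{l:homeo-subsets}, while $\lambda$ is constant along each chain of $\phi$-partners. The whole problem thus reduces to choosing $\tri{\cdot}$ so that the only admissible pair $(\lambda,\sigma)$ that leaves $\tri{\cdot}$ invariant is $(\mathbf 1,\mathrm{id})$.

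The next step is to control $\lambda$, and this is where the atom count enters. On the non-isolated points of the $K_n'$ I expect $\lambda\equiv1$ to be forced: using Proposition \ref{p:consistent_extension} to manufacture, near such a point, families of functions in $X$ whose values cannot be simultaneously rescaled by a nonconstant continuous factor while staying in $X$. At an atom $\theta_k$ (Proposition \ref{p:atomes_in_benyamini}) the point $k$ is isolated, so $\lambda(k)$ is \emph{free}: a single atom may be rescaled independently. This dichotomy is precisely the source of the restriction in the theorem. If two atoms $\theta_{k_1},\theta_{k_2}$ are interchanged by some admissible $\sigma$, then for \emph{any} AM-norm one can pick the free values $\lambda(k_1),\lambda(k_2)$ to cancel the weights and obtain a nontrivial isometry; this is the obstruction recorded in Remark \ref{r:assumptions_needed}. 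When $X$ has at most one atom no such interchange is possible, and it remains only to defeat $\sigma$ on the non-atomic part, where $\lambda$ is already trivial.

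For the AM-norm case (at most one atom) I would take a single continuous weight $w\colon K'\to[1,c]$, extend it to a bounded weight on $K$ respecting the decay at $\infty$, and set $\tri{x}=\sup_{t\in K'}|x(t)|\,w(t)$. Since $\|x\|=\sup_{K'}|x|$, this is an AM-norm with $\|\cdot\|\le\tri{\cdot}\le c\,\|\cdot\|$, and by the previous paragraph any nontrivial isometry would force $w\circ\sigma=w$ for a nontrivial admissible $\sigma$. The task is therefore to build $w$ with \emph{trivial admissible symmetry}: no nontrivial level-preserving, $\phi$-intertwining homeomorphism of $K'$ fixes $w$, and the unique atom's value is distinct from those of its $\phi$-partners. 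As $K'$ is separable metrizable, a Baire-category argument in $C(K')$ produces a generic $w$ with trivial stabilizer under the (Polish) group of admissible homeomorphisms. For the general case the sup-weight is too rigid, since a compensated atom-swap survives every AM-norm; instead I would fix positive functionals $f_i\in X^*_+$ that separate the points of $K'$ and are rigidly attached to a dense sequence of distinguished points, and set $\tri{x}=\bigl(\|x\|^2+\sum_i a_i\,f_i(|x|)^2\bigr)^{1/2}$ with $a_i>0$ so small that $\|\cdot\|\le\tri{\cdot}\le c^2\,\|\cdot\|$. This is a lattice norm (it depends only on $|x|$, is monotone, and is subadditive by coordinatewise domination in $\ell_2$), and, crucially, the added terms sense the weight $\lambda$ as well as $\sigma$, since $f_i(|Tx|)=\int\lambda\,|x\circ\sigma|\,df_i$; a separation-of-scales choice of the $a_i$ then forces each $f_i$ to be preserved, hence each distinguished point to be fixed, so $(\lambda,\sigma)=(\mathbf 1,\mathrm{id})$ regardless of the number of atoms.

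The hard part will be the two rigidity claims underlying this scheme: first, that along the non-atomic part of $K'$ the weight $\lambda$ is genuinely forced to be $1$ (so that, off the atoms, an isometry is a pure composition $x\mapsto x\circ\sigma$), which is where the concrete representation and the extension machinery of Proposition \ref{p:consistent_extension} must do the work; and second, that the chosen $w$ (respectively the family $f_i$) provably has trivial admissible symmetry, i.e. that the group of level-preserving, $\phi$-intertwining homeomorphisms of $K'$ is tame enough to be broken by a single generic weight. Verifying that this group is Polish and that ``trivial symmetry'' is a dense $G_\delta$ condition is the technical crux; the atom analysis of the second paragraph is what guarantees that, under the single-atom hypothesis, no symmetry lies outside the reach of this construction.
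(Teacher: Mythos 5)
Your dual-side setup (a lattice isometry acts on the atoms of $X^* = M(K')$, producing a pair $(\lambda,\sigma)$) is sound and matches the paper's starting point, and your diagnosis of why two or more atoms obstruct any AM-renorming is exactly Remark \ref{r:assumptions_needed}. But the core of your construction rests on a rigidity claim that is false, and this breaks the AM-case argument. You claim that at non-isolated points the weight $\lambda$ is forced to equal $1$, so that any lattice isometry of a weighted sup norm $\tri{x} = \sup_{t} w(t)|x(t)|$ must satisfy $w\circ\sigma = w$, and you then plan to pick a generic continuous $w$ with trivial stabilizer. Consider, however, $X = C[0,1]$, which is a legitimate atomless Benyamini space (take $K_1 = [0,1]$ and all other $K_n$ empty, so the point at infinity is isolated and harmless). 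For any continuous weight $w : [0,1] \to [1,c]$ and \emph{any} homeomorphism $\sigma$ of $[0,1]$, the map $Tx = \frac{w\circ\sigma}{w}\,(x\circ\sigma)$ is a surjective lattice isometry of $(X, \tri{\cdot})$: indeed $\sup_t w(t)|[Tx](t)| = \sup_t w(\sigma(t))|x(\sigma(t))| = \tri{x}$. Here $\lambda = (w\circ\sigma)/w \neq 1$, and nothing in Proposition \ref{p:consistent_extension} can rule this out, because multiplication by the continuous function $w$ is a lattice isomorphism of $C[0,1]$ onto itself conjugating your norm to the plain sup norm. Consequently the set of continuous weights with trivial isometry group is \emph{empty}, not generic, and no Baire-category argument over $C(K')$ can succeed. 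The same compensation phenomenon undercuts the claim that your functionals $f_i$ in the hybrid $\ell_2$-type norm are ``rigidly attached'' to distinguished points; the ``separation-of-scales'' step is asserted, not proved.

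The feature your proposal misses is that the weight must be \emph{discontinuous}. The paper sets $\mu(t_{in}) = \lambda_{in}$ on a dense sequence of distinguished points of each $K_n'$, with all $\lambda_{in} \in (1,c)$ pairwise distinct and $\lambda_{in} \downarrow 1$, and $\mu \equiv 1$ elsewhere. With such a weight the compensation trick produces discontinuous multipliers, and the paper never even needs to argue this directly: instead (Lemmas \ref{l:norms of point masses: all cases} through \ref{l:convergent sequence new}) it shows that $1/\mu(t)$ is an invariant recoverable from purely isometric and weak$^*$ data -- namely $\nu(t) = \sup\{C^k\alpha : C^k \alpha \leq 1\}$ over weak$^*$-limit coefficients $\alpha$ of sequences of normalized point masses -- which exploits $c^3 < C$ and the discrete $C$-power linking between the levels $K_n$. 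Hence $T^*$ preserves $\mu$, the distinctness of the $\lambda_{in}$ forces $\psi(t_{in}) = t_{in}$, and weak$^*$ continuity plus density fixes every other non-hereditarily-isolated point. Atoms, where no such approximating sequences exist, are pinned separately: trivially for $|I| \leq 1$, and for $|I| \geq 2$ by the pairwise non-isometric octagonal norms of property [N2], which is the paper's replacement for your unproven perturbation scheme. So the theorem is true, but your route to it fails at the decisive step.
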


\begin{remark}\label{r:importance_of_number_of_atoms}
 The restriction on the number of atoms is essential here; see Remark \ref{r:assumptions_needed}.
\end{remark}

The rest of this section is devoted to proving \Cref{t:new_theorem_about_Benyamini}.

Assume that $X$ is a $C$-Benyamini space ($C<2$) and that  $c < \sqrt[3]{C}$.
Let $A$ and $B$ be the sets of all $n \in \N$ for which $K_n'$ is infinite, resp.~finite and non-empty.
For $n \in B$, write $K_n' = \{t_{1n}, \ldots, t_{p_n n}\}$. For $n \in A$, find a sequence $t_{1n}, t_{2n}, \ldots $ of distinct elements of $K_n'$ which is dense in $K_n'$.
Find a family $(\lambda_{in})_{n \in A \cup B} \subset (1,c)$ of distinct numbers so that:
(i) for $n \in A$, $c > \lambda_{1n} > \lambda_{2n} > \ldots$, and $\lim_i \lambda_{in} = 1$; (ii) for $n \in B$, $c > \lambda_{1n} > \ldots> \lambda_{p_n n} > 1$.
For each $t \in K'$, let $\mu(t) = \lambda_{in}$ if $t = t_{in}$ for some $i$ and $n$, $\mu(t) = 1$ otherwise.

Denote the normalized atoms of $X$ by $(\theta_i)_{i \in I}$, where the set $I$ is countable. By Proposition \ref{p:atomes_in_benyamini}, each $\theta_i$ corresponds with a hereditarily isolated point $a_i \in K'$.
Furthermore, for each $i$, there exists a canonical band projection $P_i$ onto $\spn[\theta_i]$. Then $P_i x = x(a_i) \theta_i$. 

Our definition of $\tri{ \cdot }$ would depend on the cardinality of $I$.

\underline{$|I|=0$.} For $x \in X$ set
\begin{equation}
\triple{x} = \sup_{t \in K'} \mu(t) |x(t)| .
\label{eq:am norm: 0 atoms}
\end{equation}

\underline{$|I|=1$.} Write $I = \{1\}$; represent $X$ as $X_1 \oplus \R$, where $X_1 = \ker P_1$ is a $C$-Benyamini space (with the underlying space obtained by removing from $K$ all the points $\phi_{nm}(a_1)$, when $a_1 \in K_n$ and $m \geq n$). Let $\tri{ \cdot }_1$ be the norm defined on $X_1$ using \eqref{eq:am norm: 0 atoms} (with some collection $(t_{ni})$). Let
\begin{equation}
\triple{x} = \max \big\{ \tri{(I-P_1)x}_1 , \|P_1 x\| \big\} .
\label{eq:am norm: 1 atom}
\end{equation}

\underline{$|I|>1$.} Write $I = \{1, \ldots, m\}$ ($2 \leq m < \infty$) or $I = \N$.
Let ${\mathcal{P}} = \{ (i, j) \in I^2 : i < j \}$, and let $\pi : {\mathcal{P}} \to \N$ be an injection.
For $(i,j) \in {\mathcal{P}}$, let $\| \cdot \|_{i,j}$ be the norm on $\R^2$ whose unit ball is an octagon with vertices
 $$
 \Bigg( \pm \bigg( 1 - \frac{c-1}{c (2\pi(i,j)+1)} \bigg), \pm 1 \Bigg)
 {\textrm{   and   }}
 \Bigg( \pm 1, \pm \bigg( 1 - \frac{c-1}{2c \pi(i,j)} \bigg) \Bigg)
 $$
We mention some properties of the norms $\| \cdot \|_{i,j}$, to be used in the sequel.

\begin{labeling}{N3}
\item [N1]
$\| \cdot \|_\infty \leq \| \cdot \|_{i,j} \leq c \| \cdot \|_\infty$.
\item [N2]
The formal identity $(\R^2, \| \cdot \|_{i_1,j_1}) \to (\R^2, \| \cdot \|_{i_2,j_2})$ (with the first vector of the canonical basis mapping to the first, and the second -- to the second) is an isometry iff $i_1 = i_2$ and $j_1 = j_2$. This follows from a comparison of extreme points.
\item [N3] 
For $\gamma > 1$ and $k \in I$, there exists $L = L(k,\gamma) \geq k$ so that $\| \cdot \|_{k,j} \leq \gamma \| \cdot \|_\infty$ for $j > L$.
\item [N4] 
For $\gamma > 1$,
there exists $M = M(\gamma)$ so that $\| \cdot \|_{i,j} \leq \gamma \| \cdot \|_\infty$ whenever $j > i > M$.
\item [N5] 
If $|\alpha| \vee |\beta| = 1$ and $|\alpha| \wedge |\beta| \leq 1/c$, then $\| (\alpha,\beta) \|_{ij} = 1$.
\end{labeling}
 
 
 We let
 \begin{equation}
\triple{x} = \max \Big\{ \sup_{t \in K'} \mu(t) |x(t)| ,
 \sup_{(i,j) \in {\mathcal{P}}} \big\| \big( \mu(a_i) x(a_i), \mu(a_j) x(a_j) \big) \big\|_{i,j} \Big\} .
\label{eq:am norm: >1 atoms}
\end{equation}

Clearly, we always have $\| \cdot \| \leq \tri{ \cdot } \leq c^2 \| \cdot \|$ (in fact, if $|I| \leq 1$, we can replace $c^2$ by $c$). 
It is also clear that for $|I| \leq 1$, $\tri{ \cdot }$ is an AM-norm.
To show that the only lattice isometry on $(X, \tri{ \cdot })$ is the trivial one, we need a series of lemmas. As the proof for $|I|=1$ follows immediately from that for $|I| = 0$, we shall only consider the cases of $I = \emptyset$ and $|I| \geq 2$.

First we establish the norms of point masses. 
Let $\hat{\delta}_t = \mu(t) \delta_t$.

\begin{lemma}\label{l:norms of point masses: all cases}
For any $t \in K'$, $\triple{\hat{\delta}_t} = 1$.
\end{lemma}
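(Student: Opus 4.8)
The plan is to compute $\tri{\hat\delta_t}$ directly from the definition of the dual norm, separating the two cases $I = \emptyset$ and $|I|\ge 2$ as the main proof does. The key observation is that $\hat\delta_t = \mu(t)\delta_t$ is a point mass evaluated against the renorming $\tri{\cdot}$ on $X$, so by duality $\tri{\hat\delta_t} = \sup\{\mu(t)\,x(t) : \tri{x}\le 1\}$, and I must show this supremum equals $1$. The two inequalities $\tri{\hat\delta_t}\le 1$ and $\tri{\hat\delta_t}\ge 1$ will be handled separately.

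For the upper bound $\tri{\hat\delta_t}\le 1$, I would argue that for any $x$ with $\tri{x}\le 1$, the first term in the definition of $\tri{\cdot}$ (equation \eqref{eq:am norm: 0 atoms} or the left entry of the max in \eqref{eq:am norm: >1 atoms}) already forces $\mu(s)|x(s)|\le \tri{x}\le 1$ for every $s\in K'$, in particular $\mu(t)|x(t)|\le 1$. Thus $\hat\delta_t(x) = \mu(t)x(t)\le 1$, giving $\tri{\hat\delta_t}\le 1$ with no further work; this holds uniformly across all three norm definitions since in each case the sup over $K'$ of $\mu(s)|x(s)|$ is one of the quantities bounded by $\tri{x}$.

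For the lower bound $\tri{\hat\delta_t}\ge 1$, I need to produce, for each $t\in K'$, a function $x\in X$ with $\tri{x}\le 1$ and $\mu(t)x(t)$ arbitrarily close to (or equal to) $1$; equivalently $x(t) = 1/\mu(t)$ while keeping $\mu(s)|x(s)|\le 1$ everywhere and keeping all the octagon-norm terms $\|(\mu(a_i)x(a_i),\mu(a_j)x(a_j))\|_{i,j}\le 1$ under control. The natural candidate is a function peaked at $t$ with a sharp, narrow bump: by Lemma \ref{l:extend_from_two roots} (or Proposition \ref{p:consistent_extension} applied to a function supported near $t$ on $K_n'$) I can build $x\in X_+$ with $x(t)=1/\mu(t)$, $x\le 1/\mu(t)$ on a small neighborhood of $t$, and $x$ small away from $t$, so that $\mu(s)|x(s)|$ stays $\le 1$ globally. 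The point is that the $\lambda_{in}$ used to define $\mu$ lie in $(1,c)$ and tend to $1$, so $1/\mu(t)\le 1$ and the constraint $\mu(s)|x(s)|\le 1$ is satisfiable by making the bump decay fast enough.

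The main obstacle, and where I would spend most of the care, is controlling the octagon terms in the $|I|\ge 2$ case. If $t$ is not one of the atom-points $a_i$, I can arrange $x(a_i)=0$ for all $i$ (the $a_i$ are hereditarily isolated, hence isolated from a generic $t$), so every octagon term vanishes and only the sup term matters. If $t=a_k$ is itself an atom-point, then I must ensure that for every $j\ne k$ the pair $(\mu(a_k)x(a_k),\mu(a_j)x(a_j))$ has $\|\cdot\|_{k,j}$-norm at most $1$; setting $x(a_k)=1/\mu(a_k)$ makes the first coordinate equal $1$, and by arranging $x(a_j)=0$ for $j\ne k$ (again using that distinct atom-points are isolated and that I can extend a function vanishing at those points via Proposition \ref{p:consistent_extension}) each pair becomes $(\pm 1, 0)$, which by property \textbf{N5} (or directly, since $(1,0)$ is a vertex-adjacent boundary point) has $\|\cdot\|_{k,j}$-norm exactly $1$. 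Thus $\tri{x}=1$ and $\hat\delta_t(x)=\mu(a_k)\cdot(1/\mu(a_k))=1$, yielding $\tri{\hat\delta_t}\ge 1$ and completing the proof. I expect the only genuinely delicate point to be verifying simultaneously, over the infinitely many constraints indexed by $j$, that the single constructed $x$ keeps all octagon norms $\le 1$; this is where property \textbf{N5} together with the vanishing $x(a_j)=0$ does the essential work.
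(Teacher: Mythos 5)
Your upper bound and your treatment of the case $t=a_k$ (hereditarily isolated) are fine, and match the paper's. The genuine gap is in the case $|I|\ge 2$ with $t$ not an atom-point: you claim that ``the $a_i$ are hereditarily isolated, hence isolated from a generic $t$,'' so that you can choose the bump with $x(a_i)=0$ for all $i$ and make every octagon term vanish. This is false. Hereditarily isolated points are isolated points of $K_n'$, but infinitely many isolated points can accumulate at a non-isolated point: take $K_n'\supseteq\{0\}\cup\{1/j: j\in\N\}$ with $t=0$; every $1/j$ is an atom-point, and every neighborhood of $t$ contains all but finitely many of them. Since any continuous $x$ with $x(t)=1/\mu(t)$ must then be close to $1/\mu(t)$ at infinitely many $a_j$, you cannot kill the octagon terms, and for the finitely many pairs $(i,j)$ whose octagon norm $\|\cdot\|_{i,j}$ deviates substantially from $\|\cdot\|_\infty$ (small $\pi(i,j)$), a pair of coordinates both near $1$ has $\|\cdot\|_{i,j}$-norm strictly bigger than $1$ by a fixed amount. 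Property [N5] does not rescue this, because it requires one coordinate to be $\le 1/c$, which is exactly what you cannot guarantee near $t$.

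The paper's proof handles precisely this difficulty, and does so differently from your sketch: it never tries to avoid all atoms, only the \emph{finitely many} troublemakers. The exclusion set $V$ contains the points with $\mu(s)>\gamma\mu(t)$ together with the atom-points $a_i\in K_n'$ of small index ($i\le M(\gamma)$, resp.\ $i \le M(\gamma)\vee L(k,\gamma)$), and the bump is supported in $U\subset K_n'\setminus V$. Atoms of large index may well lie in $U$, but then properties [N3] and [N4] --- which your proposal never invokes --- guarantee $\|\cdot\|_{i,j}\le\gamma\|\cdot\|_\infty$ for the corresponding pairs, giving $\tri{\tilde x}\le\gamma^2$ rather than $\tri{\tilde x}\le 1$; the lemma then follows by letting $\gamma\downarrow 1$. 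Note also that your stronger goal $\tri{x}\le 1$ with $x(t)=1/\mu(t)$ is in general unattainable (designated points $t_{in}$ with $\mu(t_{in})>1$ are dense in an infinite $K_n'$, and low-index octagons impose strict constraints), so the approximation parameter $\gamma$ is not a cosmetic device but an essential feature of the argument.
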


\begin{proof}
For $x \in X$ and $t \in K'$, we clearly have $\tri{x} \geq \mu(t) |x(t)| = \big| \hat{\delta}_t(x) \big|$, hence $\triple{\hat{\delta}_t} \leq 1$. It remains to prove the opposite inequality.

Fix $t \in K'$ and $\gamma > 1$. We need to find $x \in X_+$ such that $x(t) = 1/\mu(t)$ and $\triple{x} \leq \gamma$.
To this end, find $n$ so that $t \in K_n'$. Next, construct a finite set $V \subset K_n'$ consisting of ``potentially troublemaking'' points.
If $|I| = \emptyset$, let
$$
V = \{s \in K_n' : \mu(s) > \gamma \mu(t)\} .
$$
If $|I| \geq 2$ and $t$ is not hereditarily isolated, let
$$
V = \{s \in K_n' : \mu(s) > \gamma \mu(t)\} \cup \{a_i \in K_n' : i \leq M(\gamma) \} ,
$$
with $M(\gamma)$ as in [N4].

If $|I| \geq 2$ and $t$ is hereditarily isolated, then $t = a_k$ for some $k$. Let
$$
V = \{s \in K_n' : \mu(s) > \gamma \mu(t)\} \cup \{a_i \in K_n' : i \leq M(\gamma) \vee  L(k,\gamma) \} \backslash \{a_k\} ,
$$
where $L(k, \gamma )$ comes from property [N3].

The set $V$ is finite and does not contain $t$.
Find an open set $U \subset K_n' \backslash V$ containing $t$. Find $x \in C(K_n)$ such that $x$ vanishes outside of $U$ and $0 \leq x \leq 1/\mu(t) = x(t)$. 
Define $x$ to be $0$ on $K_m$ for $m < n$. This function is consistent, so by Proposition \ref{p:consistent_extension}, there exists $\tilde{x} \in X_+$ so that $\tilde{x}|_{K_1 \cup \ldots \cup K_n} = x$ and $\|\tilde{x}\| = 1/\lambda_{in}$.

It remains to show that $\triple{\tilde{x}} \leq \gamma^2$. This will follow if we establish that
\begin{equation}
 \mu(s) |\tilde{x}(s)| \leq \gamma   {\textrm{   for  any   }}  s \in K' ,
 \label{eq:pointwise inequality}
\end{equation}
and (in the case of $|I| \geq 2$)
\begin{equation}
 \big\| \big( \mu(a_i) \tilde{x}(a_i), \mu(a_j) \tilde{x}(a_j) \big) \big\|_{i,j} \leq \gamma^2 
  {\textrm{   for  any   }}  i < j .
 \label{eq:pairwise inequality}
\end{equation}

Note that, due to our construction of $\tilde{x}$, $\tilde{x}(s) = 0$ if $s \in K'_m$ with $m < n$.
For $s \in K_n'$, we have $\tilde{x}(s) = 0$ for $s \notin U$, while for $s \in U$, $\mu(s) \leq \gamma \mu(t)$, so $\mu(s) |\tilde{x}(s)| \leq \gamma$. Finally, if $s \in K_m'$ for some $m > n$, we have
$\tilde{x}(s) \leq C^{n-m}/\mu(t)$, hence $\mu(s) |\tilde{x}(s)| \leq c/C < 1 < \gamma$.
This establishes \eqref{eq:pointwise inequality}.

To handle \eqref{eq:pairwise inequality}, note that
if $a_i \in \cup_{m<n} K'_m \cup (K_n' \backslash U)$, then $\tilde{x}(a_i) = 0$, and therefore,
$$
\big\| \big( \mu(a_i) \tilde{x}(a_i), \mu(a_j) \tilde{x}(a_j) \big) \big\|_{i,j}
= \big\| \big( 0, \mu(a_j) \tilde{x}(a_j) \big) \big\|_{i,j}
= \mu(a_j) \tilde{x}(a_j) .
$$
The right hand side cannot exceed $\gamma$, as discussed in the paragraph relating to \eqref{eq:pointwise inequality}.
The same conclusion holds if $a_j \in \cup_{m<n} K'_m \cup (K_n' \backslash U)$.

If $a_i, a_j \in \cup_{\ell > n}K'_\ell$, then $\tilde{x}(a_i), \tilde{x}(a_j) \leq 1/(\mu(t) C)$, hence
$$
\big\| \big( \mu(a_i) \tilde{x}(a_i), \mu(a_j) \tilde{x}(a_j) \big) \big\|_{i,j} \leq \frac{c^2}{\mu(t) C} < 1 .
$$

Now consider the case of $a_i \in U$, $a_j \in \cup_{\ell > n} K_\ell'$. In this situation, $\mu(a_j) \tilde{x}(a_j) < c/C < c^{-2}$, hence, by [N5], 
$$
\big\| \big( \mu(a_i) \tilde{x}(a_i), \mu(a_j) \tilde{x}(a_j) \big) \big\|_{i,j} \leq \gamma .
$$
The same conclusion holds if $a_j \in U$, $a_i \in \cup_{\ell > n} K_\ell'$.

Finally, if $a_i, a_j \in U$, then $\mu(a_i), \mu(a_j) \leq \gamma \mu(t)$. By the choice of $U$,
$$
\big\| \big( \mu(a_i) \tilde{x}(a_i), \mu(a_j) \tilde{x}(a_j) \big) \big\|_{i,j} \leq 
\gamma \big\| \big( \mu(a_i) \tilde{x}(a_i), \mu(a_j) \tilde{x}(a_j) \big) \big\|_\infty \leq \gamma^2 .
$$
The same conclusion holds if the roles of $a_i$ and $a_j$ are reversed. We have now established \eqref{eq:pairwise inequality}.
\end{proof}

Now suppose $T$ is a surjective lattice isometry on $(X, \triple{ \cdot })$. Note first that $T$ fixes the atoms of $X$:

\begin{lemma}\label{l:T fixes atoms}
 For any $i \in I$, $T \theta_i = \theta_i$.
\end{lemma}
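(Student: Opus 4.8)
The plan is to exploit the fact that a surjective lattice isometry must permute the normalized atoms, and then to use the asymmetry built into the octagonal norms $\| \cdot \|_{i,j}$ to force this permutation to be trivial. Since the case $I = \emptyset$ is vacuous, I assume $|I| \geq 2$. First, since $T$ is a bijective lattice homomorphism it carries order intervals onto order intervals, so it maps each normalized atom to a positive multiple of a normalized atom; by Proposition \ref{p:atomes_in_benyamini} there is therefore a permutation $\sigma$ of $I$ and scalars $\lambda_i > 0$ with $T\theta_i = \lambda_i \theta_{\sigma(i)}$.

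To pin down $\lambda_i$, I would first record that $\triple{\theta_i} = \mu(a_i)$. The atom $\theta_i$ is supported within $K'$ only at $a_i$, because its other nonzero values occur on points $\phi_{nm}(a_i) \in D(m,n) \subset K_m \backslash K_m'$; hence the point-mass term of \eqref{eq:am norm: >1 atoms} contributes exactly $\mu(a_i)$, while every octagonal term degenerates to $\| (\mu(a_i), 0) \|_{i,j'}$ or $\| (0, \mu(a_i)) \|_{i',i}$, which equals $\mu(a_i)$ by [N5]. Applying $T$ then gives $\mu(a_i) = \triple{\theta_i} = \triple{T\theta_i} = \lambda_i \mu(a_{\sigma(i)})$, so $\lambda_i = \mu(a_i)/\mu(a_{\sigma(i)})$.

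Next I would pass to the weighted coordinates $d_i = \mu(a_i) x(a_i)$ on the span of the atoms. A direct computation using the formula for $\lambda_i$ shows that in these coordinates $T$ acts simply as the permutation $\sigma$: the $j$-th weighted coordinate of $Tx$ equals $d_{\sigma^{-1}(j)}$. Since the term $\sup_t \mu(t)|x(t)|$ of $\triple{ \cdot }$ restricts on the atoms to $\sup_i |d_i|$, which is $\sigma$-invariant, the identity $\triple{Tx} = \triple{x}$ reduces to a statement about the octagonal terms. To isolate a single such term, I fix a pair $i_0 < j_0$ and test on the two-atom vector $x$ with $d_{i_0} = \alpha$, $d_{j_0} = \beta$, and all other weighted coordinates $0$; then every octagonal term with an index outside $\{i_0, j_0\}$ has a zero entry and is dominated, so $\triple{x} = \| (\alpha, \beta) \|_{i_0, j_0}$ by [N1]. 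Computing $\triple{Tx}$ the same way, with $i_1 = \sigma(i_0)$ and $j_1 = \sigma(j_0)$, yields, for all $\alpha, \beta$, the identity $\| (\alpha, \beta) \|_{i_0, j_0} = \| (\alpha, \beta) \|_{i_1, j_1}$ when $i_1 < j_1$, and $\| (\alpha, \beta) \|_{i_0, j_0} = \| (\beta, \alpha) \|_{j_1, i_1}$ when $i_1 > j_1$.

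This last identity is the crux, and is where the construction of the norms pays off. In the first case [N2] forces $(i_0, j_0) = (i_1, j_1)$, so $\sigma$ fixes both $i_0$ and $j_0$. The orientation-reversing second case must be excluded: it would require the octagon for $(i_0, j_0)$ to coincide with the coordinate-swap of the octagon for $(j_1, i_1)$, and comparing the two indentation parameters $\frac{c-1}{c(2\pi(i_0,j_0)+1)}$ and $\frac{c-1}{2c\pi(i_0,j_0)}$ with those of the swapped octagon produces the incompatible pair $2\pi(i_0,j_0) + 1 = 2\pi(j_1,i_1)$ and $2\pi(i_0,j_0) = 2\pi(j_1,i_1) + 1$. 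Hence $\sigma(i_0) = i_0$ and $\sigma(j_0) = j_0$ for every pair, so $\sigma = \mathrm{id}$; consequently $\lambda_i = \mu(a_i)/\mu(a_i) = 1$ and $T\theta_i = \theta_i$. I expect the main obstacle to be precisely the bookkeeping in this final step: reducing the full norm to a single octagonal norm and ruling out the orientation-reversing possibility through the deliberate asymmetry of the octagons.
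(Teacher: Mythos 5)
Your proposal is correct and follows essentially the same route as the paper: both identify the restriction of $\triple{\cdot}$ to the span of two rescaled atoms $e_i = \theta_i/\mu(a_i)$ with the octagonal norm $\|\cdot\|_{i,j}$, and then invoke property [N2] to force the induced permutation of atoms to be the identity. Your explicit parity argument ruling out the coordinate-swapping case (where $\sigma(i_0) > \sigma(j_0)$) is a detail the paper's proof passes over silently, but it is the same extreme-point comparison underlying [N2], not a different method.
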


\begin{proof}
This is obvious if $|I| \leq 1$. For $|I| \geq 2$, let $e_i = \theta_i/\mu(a_i)$ be the normalized atoms. By \eqref{eq:am norm: >1 atoms}, for any $\alpha, \beta \in \R$, we have
$$
\tri{ \alpha e_i + \beta e_j } = \big\| (\alpha, \beta) \big\|_{i,j} .
$$
If $T$ maps $e_i$ and $e_j$ to $e_k$ and $e_\ell$ respectively, then
$$
\big\| (\alpha, \beta) \big\|_{i,j} = \big\| (\alpha, \beta) \big\|_{k, \ell} 
{\textrm{  for  any  }}  \alpha, \beta ,
$$
which, in light of Property [N2], implies $i=k$, $j=\ell$.
\end{proof}

Now observe that $T^*$ is interval preserving \cite[Theorem 1.4.19]{M-N}, hence it maps atoms of $X^*$ to atoms.
The atoms in $X^*$ are characterized by Proposition \ref{p:atomes_in_benyamini}.
By Lemma \ref{l:norms of point masses: all cases}, the set of normalized atoms of $(X^*, \tri{ \cdot })$ (which we shall denote by $\A$) coincides with $\big\{ \hat{\delta}_t : t \in K' \big\}$.

Thus, by Lemma \ref{l:norms of point masses: all cases}, there exists a bijection $\psi : K' \to K'$ so that $T^* \hat{\delta}_t = \hat{\delta}_{\psi(t)}$.
We shall show that $\psi(t) = t$ is the identity map. In fact, Lemma \ref{l:T fixes atoms} already shows that $\psi(t) = t$ if $t$ is a hereditarily isolated point.

To proceed further, in the next few lemmas we examine weak$^*$ convergence in $\A$. For convenience, we denote by $\phi_{nn}$ the identity map on $D(n,n):=K_n$.

\begin{lemma}\label{l:convergence_in_Km new}
Suppose $m, n \in \N$, $t \in K_n'$, and the sequence $(t_i) \subset K_m' \backslash \{t\}$ converges to $s$.
Then the following are equivalent:
\begin{enumerate}
 \item $m \geq n$, and $s = \phi_{nm}(t)$.
 \item
 ${\mathrm{w}}^*-\lim_i \hat{\delta}_{t_i} = \alpha \hat{\delta}_t$ for some $\alpha > 0$.
\end{enumerate}
Moreover, if (1) holds, then 
(2) holds with $\alpha = C^{n-m}/\mu(t)$.
\end{lemma}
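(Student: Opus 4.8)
The plan is to rewrite both conditions in pointwise terms and then handle the two implications separately. Since $\hat\delta_{t_i}(x) = \mu(t_i)\,x(t_i)$ and $(\alpha\hat\delta_t)(x) = \alpha\mu(t)\,x(t)$, condition (2) says exactly that $\mu(t_i)x(t_i)\to\alpha\mu(t)x(t)$ for every $x\in X$ (convergence on all of $X$ is precisely weak$^*$ convergence). First I would record two preliminary facts. As $K$ is metrizable and each $K_m$ is compact, $K_m$ is clopen in $K$, so the sequence $(t_i)\subset K_m'\subset K_m$ cannot escape $K_m$ and its limit satisfies $s\in K_m$; in particular $s\neq\infty$, and by continuity $x(t_i)\to x(s)$ for all $x$. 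Moreover every point of $K_m$ descends through finitely many maps $\phi_{m\ell}$ (with strictly decreasing indices) to a point of $K'$, on which evaluation is a nonzero functional (Lemma \ref{l:am-dual}); hence there is $x_0\in X$ with $x_0(s)\neq 0$.

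For $(1)\Rightarrow(2)$, assume $m\ge n$ and $s=\phi_{nm}(t)$, so $x(s)=C^{n-m}x(t)$ for all $x$. The heart of the matter is to show $\mu(t_i)\to 1$. Fix $\varepsilon>0$. Because the weights on each block either form a finite set or a sequence decreasing to $1$, only finitely many enumerated points $t_{jm}\in K_m'$ have $\mu(t_{jm})\ge 1+\varepsilon$; call this finite set $S_\varepsilon$. Each point of $S_\varepsilon$ is attained by $(t_i)$ only finitely often: a point different from $s$ is met finitely often because $t_i\to s$, while $s$ itself can lie in $S_\varepsilon$ only when $m=n$ and $s=t$, and $t$ is excluded from the sequence. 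As $S_\varepsilon$ is finite, $t_i\notin S_\varepsilon$ for large $i$, so $\mu(t_i)<1+\varepsilon$ eventually; hence $\mu(t_i)\to 1$. Therefore $\mu(t_i)x(t_i)\to x(s)=C^{n-m}x(t)=\frac{C^{n-m}}{\mu(t)}\hat\delta_t(x)$ for every $x$, which proves (2) and reads off $\alpha=C^{n-m}/\mu(t)>0$, the ``moreover'' clause.

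For $(2)\Rightarrow(1)$, I would first extract a linear relation between $s$ and $t$. Using $x_0$ with $x_0(s)\neq 0$ and $x_0(t_i)\to x_0(s)$, the hypothesis forces $\mu(t_i)\to\beta:=\alpha\mu(t)x_0(t)/x_0(s)$, a finite limit with $\beta\ge 1$. Passing to a general $x$ gives $\beta x(s)=\alpha\mu(t)x(t)$, i.e. $x(s)=\lambda x(t)$ for all $x\in X$, where $\lambda=\alpha\mu(t)/\beta>0$. Since $s\in K_m$ and $t\in K_n$, part (4) of Definition \ref{d:benyamini} yields $\lambda=C^{n-m}$, equivalently $x(t)=C^{m-n}x(s)$ for all $x$. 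If $m=n$ this reads $x(s)=x(t)$ for all $x$, so $s=t=\phi_{nn}(t)$ by the separation property (part (3)); if $m\neq n$ it says precisely that $t\in D(n,m)$ with $\phi_{nm}(t)=s$. Finally $m\ge n$, for if $m<n$ then $t\in D(n,m)\subset\bigcup_{\ell<n}D(n,\ell)$, contradicting $t\in K_n'$. This establishes (1).

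The main obstacle is the weight computation $\mu(t_i)\to 1$ in $(1)\Rightarrow(2)$: it is the only place where the explicit construction of $\mu$ (weights exceeding $1$ only on the enumerated points, and tending to $1$ along each block) is essential, and it rests on the elementary topological fact that a convergent sequence meets any point other than its limit only finitely often. The remaining steps are bookkeeping with the axioms of Definition \ref{d:benyamini} and the standing properties of $K'$.
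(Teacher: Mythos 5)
Your proposal is correct, and the two directions split as follows: your argument for $(1)\Rightarrow(2)$ is the same as the paper's (the paper simply asserts $\mu(t_i)\to 1$ ``due to our selection of $(\lambda_{jm})$''; you supply the honest content of that line, namely that only finitely many enumerated points of $K_m'$ carry weight $\geq 1+\varepsilon$, and that each of them is hit only finitely often by a sequence converging to $s$), while your argument for $(2)\Rightarrow(1)$ takes a genuinely different route. The paper proceeds by contradiction using explicitly constructed test functions: if $m<n$, Proposition \ref{p:consistent_extension} produces $x\in X$ with $x(t)=1$ vanishing on every $K_j$ with $j<n$; if $s\neq\phi_{nm}(t)$, Lemma \ref{l:extend_from_two roots} produces $x$ with $x(t)=1$ and $x(s)=0$; in either case $\hat{\delta}_{t_i}(x)\to 0\neq \hat{\delta}_t(x)$, contradicting (2). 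You instead argue directly: nonvanishing of evaluation at $s$ (obtained by descending through the maps $\phi_{m\ell}$ to a point of $K'$ and invoking Lemma \ref{l:am-dual}) lets you extract the limit $\beta=\lim_i\mu(t_i)\in[1,c]$, hence the proportionality $x(s)=\lambda x(t)$ for all $x\in X$ with $\lambda>0$; then axiom (4) of Definition \ref{d:benyamini} pins down $\lambda=C^{n-m}$, which together with the separation axiom (3) (for the case $m=n$) and the fact that $t\in K_n'$ rules out $m<n$ yields exactly statement (1). Your route trades the extension machinery for the dual description of $X^*$ -- legitimate at this point of the paper, since Lemma \ref{l:am-dual} has already been established -- and is arguably shorter and closer to the definitions; the paper's route is more constructive, exhibiting concrete elements of $X$ witnessing the failure of weak$^*$ convergence, and never needs the duality result. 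Both proofs are complete, and both recover the ``moreover'' clause in the same way, from $\mu(t_i)\to 1$.
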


\begin{proof}
To show that (1) implies (2), as well as the ``moreover'' statement, we only need to observe that, due to our selection of $(\lambda_{jm})$, we have $\lim_i \mu(t_i) = 1$.   
We need to establish the converse.

First show that $m \geq n$. If $m < n$, then find an open set $U \subset K_n'$ containing $t$. By Proposition \ref{p:consistent_extension}, there exists $x \in X$ so that $0 \leq x \leq 1 = x(t)$, which vanishes on $K_n \backslash U$ and on $K_j$ for $j < n$.
In particular, $\hat{\delta}_t(x) \neq 0$, while $\hat{\delta}_{t_i}(x) = 0$ for any $i$. This  contradicts (2).

Thus $m \geq n$. Next show that $t \in D(n,m)$ 
and $s = \phi_{nm}(t)$. 
Suppose, for the sake of contradiction, that either $t \notin D(n,m)$, or $t \in D(n,m)$ and $s \neq \phi_{nm}(t)$. Find the smallest $i\leq m$ so that $s \in D(m,i)$, and let $s' = \phi_{mi}(s)$.
Then $t \neq s'$. 
By Lemma \ref{l:extend_from_two roots}, there exists $x \in X$ so that $x(t) = 1$ and $x(s') = 0$, hence also $x(s) = 0$. We observe that $\hat{\delta}_t(x) \neq 0$ and $\lim_i \hat{\delta}_{t_i}(x) = 0$, again contradicting (2).
%
%
\end{proof}

\begin{lemma}\label{l:convergence in general new}
Suppose we are given $t \in K_n'$ and a sequence $(t_i) \subset K' \backslash \{t\}$.
Then the following are equivalent:
\begin{enumerate}
 \item There exists $m \geq n$ so that for $i$ large enough, $t_i \in K_m'$. Furthermore, $(t_i)$ converges to $s = \phi_{nm}(t)$.
 \item
 ${\mathrm{w}}^*-\lim_i \hat{\delta}_{t_i} = \alpha \hat{\delta}_t$ for some $\alpha > 0$.
\end{enumerate}
Moreover, if (1) holds, then, in (2),
$\alpha = C^{n-m}/\mu(t)$.
\end{lemma}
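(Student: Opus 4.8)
The plan is to obtain both implications from the single-level statement Lemma \ref{l:convergence_in_Km new}, treating the present lemma as its ``multi-level'' upgrade. The implication (1) $\Rightarrow$ (2) is essentially immediate: if $t_i \in K_m'$ for all large $i$ and $t_i \to s = \phi_{nm}(t)$, then discarding the finitely many initial terms not lying in $K_m'$ leaves a sequence in $K_m' \setminus \{t\}$ to which Lemma \ref{l:convergence_in_Km new} applies verbatim. Since deleting finitely many terms does not affect a weak$^*$ limit, we obtain ${\mathrm{w}}^*\text{-}\lim_i \hat{\delta}_{t_i} = (C^{n-m}/\mu(t))\,\hat{\delta}_t$, which gives both (2) and the ``moreover'' clause.

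The substance is (2) $\Rightarrow$ (1). Write $t_i \in K_{m_i}'$. First I would show that the level sequence $(m_i)$ is bounded. Since every compact subset of $\cup_m K_m$ meets only finitely many of the $K_m$, a subsequence with $m_{i_k} \to \infty$ would force $t_{i_k} \to \infty$ in $K$, so for every $x \in X \subset C_0(K)$ we would have $\hat{\delta}_{t_{i_k}}(x) = \mu(t_{i_k}) x(t_{i_k}) \to 0$ (here $\mu \leq c$ is bounded and $x(\infty) = 0$). Choosing $x \in X$ with $x(t) \neq 0$ (such $x$ exists since $t \in K'$) contradicts (2), as $\alpha \mu(t) x(t) \neq 0$. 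Hence $(m_i)$ takes only finitely many values.

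Next I would show that a single level occurs infinitely often. Suppose a value $m$ occurs infinitely often; by compactness of $K_m$, pass to a subsequence $(t_{i_k}) \subset K_m'$ converging to some $s \in K_m$. This subsequence still weak$^*$-converges to $\alpha\hat{\delta}_t$, so Lemma \ref{l:convergence_in_Km new} forces $m \geq n$, $s = \phi_{nm}(t)$, and crucially $\alpha = C^{n-m}/\mu(t)$. Since $C > 1$ and $\alpha, n, \mu(t)$ are fixed, this last equation determines $m$ uniquely; therefore no two distinct levels can each occur infinitely often. Combined with the boundedness of $(m_i)$, this yields a single $m \geq n$ with $t_i \in K_m'$ for all large $i$.

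Finally, with the level stabilized at $m$, I would upgrade subsequential convergence to convergence of the whole tail: since $(t_i)$ eventually lives in the compact metrizable $K_m$, and every convergent subsequence has limit $\phi_{nm}(t)$ by the previous step's application of Lemma \ref{l:convergence_in_Km new}, the tail of $(t_i)$ converges to $s = \phi_{nm}(t)$. This establishes (1), and $\alpha = C^{n-m}/\mu(t)$ supplies the ``moreover'' statement. I expect the only delicate point to be the bookkeeping in the middle step---ruling out two simultaneous levels by playing the uniqueness of $\alpha$ against the relation $\alpha = C^{n-m}/\mu(t)$; once that is in place, the boundedness and compactness arguments are routine.
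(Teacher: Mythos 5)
Your proposal is correct and follows essentially the same route as the paper's proof: both directions are reduced to Lemma \ref{l:convergence_in_Km new}, the levels $m_i$ are shown to be bounded (since otherwise $\hat{\delta}_{t_i} \overset{w^*}{\to} 0$), and the identity $\alpha = C^{n-m}/\mu(t)$ is played against the fixed value of $\alpha$ to rule out two distinct levels occurring infinitely often. Your closing compactness argument, upgrading subsequential convergence of the tail to genuine convergence, merely makes explicit what the paper compresses into its final invocation of Lemma \ref{l:convergence_in_Km new}.
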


\begin{proof}
Lemma  \ref{l:convergence_in_Km new} shows that (1) implies (2), as well as the ``moreover'' conclusion.
To establish (2) $\Rightarrow$ (1), find, for each $i$, $m(i) \in \N$ so that $t_i \in K_{m(i)}'$.
We shall show that the sequence $(m(i))$ is eventually constant.

First we show that $(m(i))$ is bounded. Indeed, otherwise we can find a sequence $(i_p)$ so that $\lim_p m(i_p) = \infty$. Clearly $\lim x(t_{i_p}) = 0$ for any $x \in X$, hence $\hat{\delta}_{i_p} \overset{w^*}{\to} 0$.

Now suppose, for the sake of contradiction, that $(m(i))$ does not stabilize. Passing to a subsequence, we can assume that there exists $m_1 \neq m_2$ so that $m(i) = m_1$ if $i$ is odd, and $m(i) = m_2$ is even if $i$ is even. Further, we can assume that $(t_{2i-1})$ and $(t_{2i})$ converge to $s_1 \in K_{m_1}$ and $s_2 \in K_{m_2}$, respectively.
From Lemma \ref{l:convergence_in_Km new}, $m_1, m_2 \geq n$, $t_{2i} \to s_2 = \phi_{m_2 n}(t)$, and
${\mathrm{w}}^*-\lim_i \hat{\delta}_{t_i} = \hat{\delta}_t/(C^{m_2-n} \mu(t))$.
Similarly, $t_{2i-1} \to s_1 = \phi_{m_1 n}(t)$, and
${\mathrm{w}}^*-\lim_i \hat{\delta}_{t_i} = \hat{\delta}_t/(C^{m_1-n} \mu(t))$.
Thus, $1/\alpha = C^{m_2-n} \mu(t) = C^{m_1-n} \mu(t)$, which leads to the impossible conclusion $m_1 = m_2$.

Thus, the sequence $(m(i))$ is eventually constant. To conclude the proof, invoke Lemma \ref{l:convergence_in_Km new}.
\end{proof}

\begin{lemma}\label{l:convergent sequence new}
 Suppose $t \in K'$ is not hereditarily isolated. Then there exists a sequence $(t_i) \subset K'$ so that $\hat{\delta}_{t_i} \overset{w^*}{\to} \alpha \hat{\delta}_t$, for some $\alpha \in (0,1]$.
 Moreover, for every such sequence there exists $r \in \{0,1,2,\ldots\}$ so that $\alpha = 1/(C^r\mu(t))$.
\end{lemma}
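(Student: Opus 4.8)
The argument naturally splits according to the two ways in which the negation of Definition~\ref{d:hereditarily_isolated} can occur; fix $n$ with $t \in K_n'$. For the existence claim the plan is to manufacture points of $K'$ to which Lemma~\ref{l:convergence in general new} applies. If $t$ is \emph{not} isolated in $K_n'$, I would simply pick $(t_i) \subset K_n' \setminus \{t\}$ with $t_i \to t$; since $\phi_{nn}$ is the identity, Lemma~\ref{l:convergence in general new} (with $m = n$, $s = t$) gives $\hat{\delta}_{t_i} \overset{w^*}{\to} (1/\mu(t))\hat{\delta}_t$, and $\alpha = 1/\mu(t) \in (0,1]$ because $\mu(t) \geq 1$.

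The remaining case is that $t$ is isolated in $K_n'$ but some $m > n$ has $t \in D(n,m)$ with $s := \phi_{nm}(t)$ not isolated in $K_m$. Here I would run a finite descent. Choose distinct $s_i \in K_m \setminus \{s\}$ with $s_i \to s$ and split into two possibilities. If infinitely many $s_i$ lie in $K_m'$, that subsequence lies in $K_m'$ and converges to $\phi_{nm}(t)$, so Lemma~\ref{l:convergence in general new} applies directly and yields $\alpha = C^{n-m}/\mu(t) \in (0,1]$. Otherwise, since there are only finitely many indices below $m$, infinitely many $s_i$ lie in a single $D(m,\ell)$ with $\ell < m$; as $D(m,\ell)$ is closed and $\phi_{m\ell}$ is a homeomorphism onto $D(\ell,m)$ (Lemma~\ref{l:homeo-subsets}), the points $r_i := \phi_{m\ell}(s_i) \subset K_\ell$ are distinct and converge to $r := \phi_{m\ell}(s)$. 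Composing the defining relations gives $x(t) = C^{\ell - n} x(r)$ for all $x \in X$, so $t \in D(n,\ell)$, $\ell \geq n$ (as $t \in K_n'$ excludes smaller indices), and $r = \phi_{n\ell}(t)$. I would then repeat the dichotomy with $(r_i) \to r$ in place of $(s_i) \to s$. Because the index strictly decreases at each step and $K_1 = K_1'$, the recursion terminates after finitely many steps at some index $p \geq n$ with a sequence $(w_i) \subset K_p'$ converging to $w = \phi_{np}(t)$; Lemma~\ref{l:convergence in general new} then produces $\hat{\delta}_{w_i} \overset{w^*}{\to} \alpha \hat{\delta}_t$ with $\alpha = C^{n-p}/\mu(t) \in (0,1]$.

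For the ``moreover'' statement the plan is to read it off Lemma~\ref{l:convergence in general new}. Given any sequence $(t_i) \subset K'$ with $\hat{\delta}_{t_i} \overset{w^*}{\to} \alpha \hat{\delta}_t$ and $\alpha \in (0,1]$, I would discard the terms equal to $t$ (an eventually constant sequence describes no accumulation at $t$ and is excluded); in the remaining case $(t_i) \subset K' \setminus \{t\}$, Lemma~\ref{l:convergence in general new} supplies $m \geq n$ with $t_i \in K_m'$ eventually and $\alpha = C^{n-m}/\mu(t)$, so setting $r := m - n \in \{0,1,2,\ldots\}$ gives $\alpha = 1/(C^r \mu(t))$ as required.

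The main obstacle is the descent in the second existence case: the non-isolation of $s$ is a statement inside $K_m$, whereas Lemma~\ref{l:convergence in general new} demands approximating points lying in some $K_p'$ and converging to the correctly glued point $\phi_{np}(t)$. The delicate points are therefore the bookkeeping showing that the homeomorphisms $\phi_{m\ell}$ transport the approximating sequence consistently (so the limit remains of the form $\phi_{n\,\cdot}(t)$ at every stage), and the termination of the recursion, which rests on the strict decrease of indices together with the fact that $K_1$ has no lower layers, i.e.\ $K_1 = K_1'$. Once these are secured, each appeal to Lemma~\ref{l:convergence in general new} is routine.
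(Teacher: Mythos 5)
Your proposal is correct, and its skeleton matches the paper's: the same dichotomy (either $t$ is not isolated in $K_n'$, or some glued image $\phi_{nm}(t)$ fails to be isolated in $K_m$), the same reduction to Lemma~\ref{l:convergence in general new}, and the same reading of the ``moreover'' clause (the paper, too, tacitly restricts to sequences avoiding $t$, since Lemma~\ref{l:convergence in general new} requires $(t_i) \subset K' \setminus \{t\}$). The genuine divergence is in the second case, which is the heart of the lemma. The paper chooses the \emph{smallest} $m > n$ for which $s = \phi_{nm}(t)$ is not isolated in $K_m$, and uses minimality to show that $s$ is an isolated point of the closed set $\cup_{k<m} D(m,k)$: membership $s \in D(m,k)$ with $k < n$ would contradict $t \in K_n'$, while for $n \leq k < m$ the homeomorphism $\phi_{mk}$ carries $s$ to $\phi_{nk}(t)$, which is isolated in $K_k$ by minimality (by hypothesis when $k = n$). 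Hence an approximating sequence for $s$ can be taken inside $K_m'$ at once. You instead start from an \emph{arbitrary} admissible $m$ and repair the approximating sequence dynamically: pigeonhole it into a single $D(m,\ell)$, transport it downward by $\phi_{m\ell}$, and recurse. Both arguments consume exactly the same structural facts (closedness of the $D$'s, finiteness of the lower indices, Lemma~\ref{l:homeo-subsets}, and the bound $\ell \geq n$ forced by $t \in K_n'$); the paper's minimal choice forecloses the bad case before it arises, whereas your descent digests it after the fact. Your version is a legitimate substitute, at the cost of some termination bookkeeping that the minimality argument hides.

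Two details of that bookkeeping could be sharpened, though neither is a gap. First, the true floor of the recursion is $n$, not $1$: you already showed the indices stay $\geq n$, and in fact a descent step can never land exactly on $n$, since that would produce distinct points of $K_n$ converging to $t$, which is isolated in $K_n$ in this case; so every pigeonholed index satisfies $\ell > n$, and termination in the ``infinitely many points in $K_p'$'' branch is forced at some $p > n$. Citing $K_1 = K_1'$ is therefore the wrong (though harmless) reason. Second, even if the boundary case $p = n$ were reachable, Lemma~\ref{l:convergence in general new} with $m = n$ would still return $\alpha = 1/\mu(t) \in (0,1]$, so every branch of your argument closes.
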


\begin{proof}
Suppose first $t$ is not isolated in $K_n$. Then $t$ cannot be isolated in the open subset $K_n' \subset K$, so we can find a sequence $(t_i) \subset K_n'$, converging to $t$. Clearly $\delta_{t_i} \to \delta_t$ (in the weak$^*$ topology).
Moreover, $\mu(t_i) \to 1$, hence $\hat{\delta}_{t_i} \to \alpha \hat{\delta}_t$, where $\alpha = 1/\mu(t) \in (1/c,1]$.

Now suppose $t$ is isolated in $K_n$ (equivalently, in $K_n'$). Use Proposition \ref{p:atomes_in_benyamini} to find the smallest $m > n$ s.t. $s = \phi_{nm}(t)$ is not isolated in $K_m$. 
We claim that $K_m'$ is non-empty, and $s$ belongs to the closure.
Indeed, 
as $t \in K_n'$, $s$ cannot belong to $D(m,k)$ with $k < n$.
In addition, if $s \in D(m,k)$ for some $n \leq k < m$, then $s$ is an isolated point of $D(m,k)$, due to the minimality of $m$.
Consequently, $s$ is an isolated point of $\cup_{k < m} D(m,k)$.
As $s$ is not isolated in $K_m$, we can find a sequence $(t_i) \subset K_m'$ converging to $t$. Then $\delta_{t_i} \overset{w^*}{\to} C^{n-m} \delta_t$, hence $\hat{\delta}_{t_i} \to \alpha \hat{\delta}_t$, where $\alpha = C^{n-m}/\mu(t) \in (C^{n-m}/c,C^{n-m}]$.

Now suppose $\hat{\delta}_{t_i} \overset{w^*}{\to} \alpha \hat{\delta}_t$, for some $\alpha \in (0,1]$. By Lemma \ref{l:convergence in general new}, there exists $m$ so that $t_i \in K_m$, for $m$ large enough; and furthermore, $t_i \to \phi_{mn}(t)$.
As in the previous paragraph, $\alpha = C^{n-m}/\mu(t)$.
\end{proof}

\begin{proof}[Theorem \ref{t:new_theorem_about_Benyamini} -- completion of the proof]
 Suppose $T$ is a lattice isometry on $(X, \triple{ \cdot })$. 
 By Subsection \ref{ss:dual}, it suffices to show that $T^* \hat{\delta}_t = \hat{\delta}_t$ for any $t \in K'$. As $T^*$ maps normalized atoms to normalized atoms, $T^* \hat{\delta}_t = \hat{\delta}_s$, where $s = \psi(t) \in K'$. By Lemma \ref{l:T fixes atoms}, $\psi(t) = t$ if $t$ is hereditarily isolated. As the set $\A$ of normalized atoms is identified with $\big\{ \hat{\delta}_t : t \in K' \big\}$, we conclude that $t$ is not hereditarily isolated iff $\psi(t)$ satisfies the same condition.
 For future use, note that if $t$ is hereditarily isolated, then $t = t_{in}$ for some $i,n$.
 
 Now suppose $t$ is not hereditarily isolated. Let $s = \psi(t)$.
 In light of Lemma \ref{l:convergent sequence new}, there exists a sequence $(u_i) \subset K'$ so that $\hat{\delta}_{u_i} \overset{w^*}{\to} \alpha \hat{\delta}_t$. Moreover, for every such sequence,
$$
\frac{1}{\mu(t)} = \nu(t) := \sup \big\{ C^k \alpha : k \in \{0,1,2,\ldots\}, C^k \alpha \leq 1 \big\} .
$$
Being isometric and weak$^*$ to weak$^*$ continuous, $T^*$ preserves $\nu( \cdot)$, hence $\mu( \psi(t) ) = \mu(t)$, for any $t \in K'$.

Recall that $t_{in}$ is the unique point $t$ with $\mu(t) = \lambda_{in}$.
Consequently, $\psi(t_{in}) = t_{in}$, or equivalently, $T^* \hat{\delta}_{t_{in}} = \hat{\delta}_{t_{in}}$.

Now suppose $t \in K' \backslash ( \cup_{i,n} \{ t_{in} \} )$ is not hereditarily isolated.
Find a sequence $\big(t_{i_jn_j}\big)_j$ which converges to $\phi_{mn}(t)$ for some $m \geq n$.
By Lemma \ref{l:convergence_in_Km new}, 
$$
{\mathrm{w}}^*-\lim_j \hat{\delta}_{t_{i_jn_j}} = C^{n-m} \hat{\delta}_t ,
$$
hence, due to the weak$^*$ to weak$^*$ continuity of $T^*$,
$$
{\mathrm{w}}^*-\lim_j T^* \hat{\delta}_{t_{i_jn_j}} = C^{n-m} \hat{\delta}_{\psi(t)} ,
$$
However, the left hand sides of the two centered expressions coincide, hence $\psi(t) = t$.
\end{proof}

\begin{remark}\label{r:assumptions_needed}
 In Theorem \ref{t:new_theorem_about_Benyamini}, the desired renorming cannot be an AM-space if the number of atoms exceeds $1$. Indeed, suppose $a_1, \ldots, a_n$ are normalized atoms in an AM-space $X$, and let $X_0 = \{a_1, \ldots, a_n\}^\perp$. If $\pi$ is a permutation of $\{1, \ldots, n\}$, then $T : X \to X$, defined by $T a_i = a_{\pi(i)}$ and $Tx = x$ for $x \in X_0$, is an isometry. Thus, any AM renorming of a space with more than one atom will have non-trivial lattice isometries.
 \end{remark}

\end{document}